\theoremstyle{plain}
\newtheorem{theorem}{Theorem}[section]
\newtheorem{lemma}[theorem]{Lemma}
\theoremstyle{definition}
\newtheorem{remark}[theorem]{Remark}
\newtheorem{cor}[theorem]{Corollary}
\theoremstyle{remark}
\begin{document}

\title [Extreme points  of $B_{\mathcal{L}(X)_w^*}$ and best approximation in $\mathcal{L}(X)_w$]{Extreme points of the unit ball of $\mathcal{L}(X)_w^*$ and best approximation in $\mathcal{L}(X)_w$}

\author[Arpita Mal]{Arpita Mal}

\email{arpitamalju@gmail.com}



\subjclass[2010]{Primary 46B28, Secondary 47A12 }
\keywords{Numerical radius; best approximation; distance formula;  Birkhoff-James orthogonality; linear operators}



\date{}
\maketitle
\begin{abstract}
We study the geometry of $\mathcal{L}(X)_w,$ the space of all bounded linear operators on a Banach space $X,$ endowed with the numerical radius norm, whenever the numerical radius defines a norm. We obtain the form of the extreme points  of the unit ball of the dual space of $\mathcal{L}(X)_w.$ Using this structure, we explore Birkhoff-James orthogonality, best approximation and deduce distance formula in $\mathcal{L}(X)_w.$ A special attention is given to the case of operators satisfying a notion of smoothness.   Finally, we obtain an equivalence between Birkhoff-James orthogonality in $\mathcal{L}(X)_w$ and that in $X.$
\end{abstract}

\section{Introduction}
The purpose of this article is to explore best approximation in those operator spaces where numerical radius defines a norm using the form of the extreme points of the dual of the operator space. Let us first introduce the notations and terminologies that will be used throughout the article.\\
Let $H$ be a Hilbert space over the field $\mathbb{F},$ where $\mathbb{F}=\mathbb{R}$ or $\mathbb{C}.$  For a bounded linear operator $T$ on $H,$ the numerical radius of $T,$ denoted as $w(T),$ is defined as 
\[w(T)=\sup\{|\langle Tx, x\rangle |:x\in H,\|x\|=1\}.\] 
The natural generalization of this notion when $T$ is defined on a Banach space $X$ over the field $\mathbb{F},$ is as follows:
 \[w(T)=\sup\{|x^*(Tx)|:x^*\in S_{X^*},x\in S_X,x^*(x)=1\},\]
 where $X^*$ is the dual space of $X$ and $S_X$ denotes the unit sphere of the corresponding space $X.$ Suppose that $\mathcal{L}(X)$ (respectively $\mathcal{L}(H)$) is the space of all bounded linear operators on $X$ (respectively $H$). Clearly, the numerical radius $w(.)$  defines a seminorm on $\mathcal{L}(X).$ Very often $w(.)$ actually defines a norm on $\mathcal{L}(X),$ which is equivalent to the operator norm. In this article, we only consider those Banach spaces $X,$ where $w(.)$ is a norm on $\mathcal{L}(X).$    
We denote by $\mathcal{L}(X)_w,$ the space of all bounded linear operators on $X$ endowed with the numerical radius norm.\\
For $x\in X$ and a subspace $Z$ of $X,$ the distance of $x$ from $Z$ is defined as $d(x,Z)=\inf\{\|x-z\|:z\in Z\}.$ An element $z_0\in Z$ is said to be a best approximation to $x$ out of $Z,$ if $\|x-z_0\|=d(x,Z).$ Let $\mathscr{L}_Z(x)=\{z_0\in Z:\|x-z_0\|=d(x,Z)\}.$ In general, $\mathscr{L}_Z(x)$ may be empty. However, if $Z$ is finite-dimensional, then $\mathscr{L}_Z(x)\neq \emptyset$ for all $x\in X.$ Birkhoff-James (B-J) orthogonality and best approximation are closely related notions. For $x,y\in X,$ $x$ is said to be B-J orthogonal \cite{B,J} to $y,$ denoted as $x\perp_B y,$ if $\|x+\lambda y\|\geq \|x\|$ for all scalars $\lambda.$   We say that $x\perp_B Z$ if $x\perp_B z$ for all $z\in Z.$ Observe that $z_0\in \mathscr{L}_Z(x)$ if and only if $x-z_0\perp_B Z.$ For $(0\neq)x\in X,$ suppose that $J(x)=\{f\in S_{X^*}:f(x)=\|x\|\}.$ Note that, $J(x)$ is a non-empty, convex, weak*compact, extremal subset of the unit ball of $X^*.$ If $J(x)$ is singleton, then $x$ is said to be smooth in $X.$ From a well-known result (\cite[Th. 2.1]{J}) of James, it follows that $x\perp_B y$ if and only if there exists $f\in J(x)$ such that $f(y)=0.$ There is another fundamental characterization of B-J orthogonality due to Singer in terms of the extreme points of the unit ball  of $X^*.$ We denote the unit ball of $X$ by $B_X$ and  the set of all extreme points of $B_X$ by $E_X.$ 
\begin{theorem}\cite[Th. 1.1, pp. 170]{S}\label{th-singer}
	Let $\mathbb{X}$ be a Banach space. Let $x\in \mathbb{X},$ $\mathbb{\mathcal{W}}$ be a subspace of $\mathbb{X}$ such that $\dim(\mathcal{W})=n$ and $x\notin \mathcal{W}.$ Then $y\in \mathscr{L}_{\mathcal{W}}(x)$ if and only if there exist
	$h$ extreme points $f_1,f_2,\ldots,f_h\in E_{\mathbb{X}^*},$ ($h\leq n+1$ if $\mathbb{F}=\mathbb{R}$ and $h\leq 2n+1$ if $\mathbb{F}=\mathbb{C}$), $h$ numbers $t_1,t_2,\ldots,t_h>0$ such that $\sum_{i=1}^h t_i=1,$ $\sum_{i=1}^h t_if_i(w)=0$ for all $w\in \mathcal{W}$ and $f_i(x-y)=\|x-y\|$ for all $1\leq i\leq h.$
\end{theorem}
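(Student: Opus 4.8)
The plan is to establish both directions, handling the easy ``if'' part first. Suppose extreme points $f_1,\dots,f_h\in E_{X^*}$ and scalars $t_1,\dots,t_h>0$ with $\sum_{i=1}^h t_i=1$, $\sum_{i=1}^h t_i f_i(w)=0$ for all $w\in\mathcal{W}$, and $f_i(x-y)=\|x-y\|$ are given. Since $y\in\mathcal{W}$, the annihilation property gives $\sum_i t_i f_i(y)=0$, so for every $w\in\mathcal{W}$ one computes $\sum_{i=1}^h t_i f_i(x-w)=\sum_i t_i f_i(x-y)=\|x-y\|$, a nonnegative real number. As $\|f_i\|=1$ and $\sum_i t_i=1$, it follows that $\|x-y\|=\big|\sum_i t_i f_i(x-w)\big|\le\sum_i t_i\|x-w\|=\|x-w\|$. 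Hence $\|x-y\|\le\|x-w\|$ for all $w\in\mathcal{W}$, i.e.\ $y\in\mathscr{L}_{\mathcal{W}}(x)$.

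For the converse, I would first reduce best approximation to Birkhoff--James orthogonality using the equivalence noted in the text: $y\in\mathscr{L}_{\mathcal{W}}(x)$ iff $u:=x-y\perp_B\mathcal{W}$, and set $\delta:=d(x,\mathcal{W})=\|u\|>0$ (positive since $\mathcal{W}$ is closed and $x\notin\mathcal{W}$). The next step is a duality step: by the Hahn--Banach distance formula, $\delta=\max\{|f(x)|:f\in B_{X^*},\ f|_{\mathcal{W}}=0\}$, the maximum being attained because $B_{X^*}\cap\mathcal{W}^{\perp}$ is weak*-compact. After a unimodular rotation a maximizer $f$ satisfies $\|f\|=1$, $f|_{\mathcal{W}}=0$, and (as $y\in\mathcal{W}$) $f(u)=f(x)=\delta=\|u\|$; thus $f\in J(u)$ and $f$ annihilates $\mathcal{W}$. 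Writing $R\colon X^*\to\mathcal{W}^*$ for the restriction map and identifying $\mathcal{W}^*$ with $\mathbb{F}^n$ via a basis of $\mathcal{W}$, this says precisely $0\in R(J(u))$.

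It remains to produce the required extreme points with the stated cardinality bound, which I expect to be the main obstacle. Here $J(u)$ is a weak*-compact convex \emph{face} of $B_{X^*}$, so by Krein--Milman it is the weak*-closed convex hull of its extreme points, and every extreme point of the face $J(u)$ is an extreme point of $B_{X^*}$; moreover $R$ is weak*-to-norm continuous, so $R(J(u))$ is a compact convex subset of $\mathbb{F}^n$, regarded as $\mathbb{R}^m$ with $m=n$ if $\mathbb{F}=\mathbb{R}$ and $m=2n$ if $\mathbb{F}=\mathbb{C}$. The delicate point is that $0$ must be written as a convex combination of images of \emph{genuine} extreme points, not merely of limits of such images. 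To handle this I would apply the Minkowski--Carath\'eodory theorem to the compact convex set $R(J(u))$: the point $0$ is a convex combination $0=\sum_{i=1}^h t_i p_i$ of at most $m+1$ extreme points $p_i$ of $R(J(u))$, with $t_i>0$ and $\sum_i t_i=1$. Each singleton $\{p_i\}$ is a face of $R(J(u))$, so its preimage $R^{-1}(p_i)\cap J(u)$ is a nonempty compact convex face of $J(u)$; choosing an extreme point $f_i$ of this face yields $f_i\in\mathrm{ext}(J(u))\subseteq E_{X^*}$ with $R(f_i)=p_i$. These $f_i$ satisfy $f_i(u)=\|u\|$, hence $f_i(x-y)=\|x-y\|$, while $\sum_i t_i f_i(w)=\sum_i t_i p_i(w)=0$ for all $w\in\mathcal{W}$, and $h\le m+1$ gives exactly $h\le n+1$ in the real case and $h\le 2n+1$ in the complex case. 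This lifting from the finite-dimensional Carath\'eodory count back to extreme functionals on $X$ is the crux, and it completes the proof.
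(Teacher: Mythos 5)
This theorem is not proved in the paper at all: it is quoted verbatim from Singer's monograph \cite{S}, so there is no in-paper argument to compare yours against, and your proof has to stand on its own. It does --- it is correct. The ``if'' direction is the standard averaging estimate (using $y\in\mathcal{W}$ and $\|f_i\|=1$). In the converse, the delicate points are all handled properly: the duality formula $d(x,\mathcal{W})=\max\{|f(x)|:f\in B_{\mathbb{X}^*},\ f|_{\mathcal{W}}=0\}$, attained by weak*-compactness of $B_{\mathbb{X}^*}\cap\mathcal{W}^{\perp}$, produces after a unimodular rotation an element of $J(x-y)\cap\mathcal{W}^{\perp}$, i.e. $0\in R(J(x-y))$ where $R:\mathbb{X}^*\to\mathcal{W}^*$ is the restriction map; since $R$ is the adjoint of the inclusion $\mathcal{W}\hookrightarrow\mathbb{X}$ and $\mathcal{W}^*$ is finite-dimensional, $R$ is weak*-to-norm continuous, so $R(J(x-y))$ is a compact convex subset of $\mathbb{R}^m$ with $m=n$ if $\mathbb{F}=\mathbb{R}$ and $m=2n$ if $\mathbb{F}=\mathbb{C}$, and Minkowski--Carath\'eodory gives $0=\sum_{i=1}^{h}t_ip_i$ with the $p_i$ extreme points of that image and $h\le m+1$, which matches the stated bounds exactly. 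The lifting step, which you rightly identify as the crux, is legitimate: $R^{-1}(p_i)\cap J(x-y)$ is a nonempty weak*-compact convex face of $J(x-y)$ (preimages of faces under linear maps restricted to convex sets are faces), Krein--Milman yields an extreme point $f_i$ of it, and by transitivity of the face relation $f_i$ is extreme in $J(x-y)$ and hence --- $J(x-y)$ being an extremal subset of $B_{\mathbb{X}^*}$, a fact the paper itself records --- extreme in $B_{\mathbb{X}^*}$. This duality-plus-Carath\'eodory-plus-face-lifting scheme is essentially the classical route to Singer's theorem, so you have reconstructed a correct proof of the cited result; the only implicit hypothesis you use, namely $y\in\mathcal{W}$, is indeed part of the meaning of $y\in\mathscr{L}_{\mathcal{W}}(x)$.
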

B-J orthogonality has been extensively studied on $\mathcal{L}(X)$ with respect to operator norm (see \cite{BS,G,MP,RAO21,SMP,SPM2} for some references). Since numerical radius is an important concept related to an operator, it is natural and interesting to study B-J orthogonality in $\mathcal{L}(X)_w.$  It is well-known that if $H$ is a complex Hilbert space, then $w(.)$ always defines a norm on $\mathcal{L}(H).$ In \cite{MPS}, we initiated the study of B-J orthogonality on $\mathcal{L}(H)_w.$ Recently, in \cite{RS21}, Roy and Sain studied B-J orthogonality on $\mathcal{L}(X)_w.$ For $T,A\in \mathcal{L}(X)_w,$ $T$ is B-J orthogonal to $A$ with respect to numerical radius norm if $w(T+\lambda A)\geq w(T)$ for all scalars $\lambda.$ We denote it as $T\perp_wA,$ i.e.,
\[T\perp_wA\Leftrightarrow w(T+\lambda A)\geq w(T)~\mbox{ for all scalars }\lambda. \]
Clearly, using the characterization of James, we get $T\perp_wA$ if and only if there exists $f\in J_w(T)$ such that $f(A)=0,$ where
\[J_w(T)=\{f\in \mathcal{L}(X)_w^*:\|f\|=1,f(T)=w(T)\}.\]
  Following \cite{RS21}, we say that $T$ is nu-smooth if and only if $J_w(T)$ is singleton.
 Let $\mathcal{V}$ be a subspace of $\mathcal{L}(X)_w.$ Then we say that $T\perp_w \mathcal{V}$ if $T\perp_w A$ for all $A\in \mathcal{V}.$ In $\mathcal{L}(X)_w,$ we denote the distance of $T$ from $\mathcal{V}$ by $d_w(T,\mathcal{V}),$ i.e., 
$$d_w(T,\mathcal{V})=\inf\{w(T-S):S\in \mathcal{V}\}$$ and the collection of all best approximations to $T$ out of $\mathcal{V}$ by $\mathscr{L}_{\mathcal{V}}(T)_w,$ i.e.,  $$\mathscr{L}_{\mathcal{V}}(T)_w=\{S\in \mathcal{V}:w(T-S)=d_w(T,\mathcal{V})\}.$$ 
For $\lambda\in \mathbb{F}, \Re(\lambda)$ and $\Im(\lambda)$ are respectively the real and imaginary parts of $\lambda.$ If $A\subseteq X,$ then $co(A)$ denotes the convex hull of $A.$ For a set $A\subseteq X$ and $B\subseteq X^*,$ the polar of $A$ denoted as $A^o$ and prepolar of $B$ denoted as ${}^oB$ are defined as follows:
$$A^o=\{x^*\in X^*:|x^*(a)|\leq 1 ~\forall~a\in A\}~\text{and}$$ \[{}^oB=\{x\in X:|b^*(x)|\leq 1 ~\forall~b^*\in B\}.\]  
For $x\in X,x^*\in X^*,$ $x^*\otimes x$ denotes a linear functional on $\mathcal{L}(X)_w$ defined as $x^*\otimes x(T)=x^*(Tx)$ for all  $T\in\mathcal{L}(X)_w.$ Consider the set 
\begin{equation}
	\mathcal{A}=\{x^*\otimes x\in \mathcal{L}(X)_w^*:x^*\in B_{X^*},x\in B_{X},|x^*(x)|=\|x^*\|\|x\|\}.
\end{equation}
Then it is straightforward to check that
\[w(T)=\sup\{|x^*(Tx)|:x^*\otimes x\in\mathcal{A}\}.\]
After this introductory part, the article contains two sections. In section 2, we explore the extreme points of $B_{\mathcal{L}(X)_w^*}.$ We prove that, if $X$ is finite-dimensional, then each extreme point of $B_{\mathcal{L}(X)_w^*}$ assumes a nice form. In particular, it is of the form $x^*\otimes x$ for some $x^*\in E_{X^*}$ and $ x\in E_X$ such that $|x^*(x)|=1.$ In section 3, we use this structure of $E_{\mathcal{L}(X)_w^*},$ to explore best approximation and deduce distance formula in $\mathcal{L}(X)_w.$ For $T\in \mathcal{L}(X)_w,$ and a subspace $\mathcal{V}$ of $\mathcal{L}(X)_w,$ we characterize $T\perp_w\mathcal{V}.$  Then we obtain $d_w(T,\mathcal{V}),$ where $T\in \mathcal{L}(X)_w\setminus \mathcal{V}.$ A special attention is given to the case where $\mathscr{L}_{\mathcal{V}}(T)_w$ contains an operator $S$ such that $T-S$ is nu-smooth. Finally, we obtain an equivalence between B-J orthogonality in $\mathcal{L}(X)_w$ and that in $X.$ We show that for a subspace $Z$ of $X,$ if $T\in \mathcal{L}(X)_w$ is nu-smooth, then $T\perp_w \mathcal{L}(X,Z)$ if and only if $x_0\perp_B Z$ for some $x_0\in E_X,$ satisfying some other properties. Some results of section 3 generalize recently obtained results of \cite{MP,MPS,RS21}. Moreover, all the results of this section highlight the significance of the extremal structure of $B_{\mathcal{L}(X)_w^*}$ obtained in  section 2.  


\section{Extreme points of $B_{\mathcal{L}(X)_w^*}$}
We begin this section with the study of extreme points of $	B_{\mathcal{L}(X)_w^*},$ where $X$ is an arbitrary Banach space. 
\begin{theorem}\label{th-exin}
	Let $X$ be a Banach space. Then $	E_{\mathcal{L}(X)_w^*}\subseteq {\overline{\mathcal{A}}}^{w*},$ where
	\begin{equation}\label{eq-seta}
	\mathcal{A}=\{x^*\otimes x:x^*\in B_{X^*},x\in B_{X},|x^*(x)|=\|x^*\|\|x\|\}.
	\end{equation} 
\end{theorem}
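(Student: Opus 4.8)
The plan is to realize $B_{\mathcal{L}(X)_w^*}$ as the weak*-closed convex hull of $\mathcal{A}$ and then invoke the partial converse of the Krein--Milman theorem (Milman's theorem). First I would record two elementary facts about $\mathcal{A}$. Since each $x^*\otimes x\in\mathcal{A}$ satisfies $|x^*(Tx)|\le w(T)$ for every $T$ (this is exactly the displayed formula $w(T)=\sup\{|x^*(Tx)|:x^*\otimes x\in\mathcal{A}\}$), each element of $\mathcal{A}$ has dual norm at most $1$, so $\mathcal{A}\subseteq B_{\mathcal{L}(X)_w^*}$; in particular $\overline{\mathcal{A}}^{w*}$ is a weak*-closed subset of the weak*-compact ball $B_{\mathcal{L}(X)_w^*}$, hence itself weak*-compact. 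Moreover $\mathcal{A}$ is balanced: if $x^*\otimes x\in\mathcal{A}$ and $|\lambda|=1$, then $\lambda(x^*\otimes x)=(\lambda x^*)\otimes x$, and $(\lambda x^*,x)$ again satisfies $|(\lambda x^*)(x)|=\|\lambda x^*\|\,\|x\|$, so $\lambda(x^*\otimes x)\in\mathcal{A}$.

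Next I would compute polars in the dual pairing between $\mathcal{L}(X)_w$ and $\mathcal{L}(X)_w^*$, the latter equipped with its weak* topology, whose continuous dual is exactly $\mathcal{L}(X)_w$. The prepolar of $\mathcal{A}$ is
\[
{}^o\mathcal{A}=\{T\in\mathcal{L}(X)_w:|x^*(Tx)|\le 1\ \text{for all}\ x^*\otimes x\in\mathcal{A}\}=\{T:w(T)\le 1\}=B_{\mathcal{L}(X)_w},
\]
again by the norming formula for $w$. Taking the polar once more gives $({}^o\mathcal{A})^o=(B_{\mathcal{L}(X)_w})^o=B_{\mathcal{L}(X)_w^*}$, since $\sup_{w(T)\le 1}|f(T)|=\|f\|$ for $f\in\mathcal{L}(X)_w^*$. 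On the other hand, the bipolar theorem identifies $({}^o\mathcal{A})^o$ with the weak*-closed absolutely convex hull of $\mathcal{A}$; because $\mathcal{A}$ is already balanced this reduces to $\overline{co}^{w*}(\mathcal{A})$. Combining the two computations, $B_{\mathcal{L}(X)_w^*}=\overline{co}^{w*}(\mathcal{A})$.

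Finally I would apply Milman's theorem: since the weak*-compact convex set $B_{\mathcal{L}(X)_w^*}$ is the weak*-closed convex hull of $\mathcal{A}$, every extreme point of $B_{\mathcal{L}(X)_w^*}$ lies in $\overline{\mathcal{A}}^{w*}$, which is the claim. The main obstacle I anticipate is purely a matter of topological bookkeeping: applying the bipolar theorem in the pairing $\langle\mathcal{L}(X)_w,\mathcal{L}(X)_w^*\rangle$ so that the resulting closure is genuinely the weak* one, and using the balancedness of $\mathcal{A}$ to pass from the absolutely convex hull to the plain convex hull. A self-contained alternative to the bipolar step is a direct Hahn--Banach separation argument: if some $f_0\in B_{\mathcal{L}(X)_w^*}$ failed to lie in $\overline{co}^{w*}(\mathcal{A})$, a weak*-continuous functional, that is, some $T\in\mathcal{L}(X)_w$, would separate them, and the balancedness of $\mathcal{A}$ would upgrade the separation inequality to $\Re f_0(T)>w(T)$, contradicting $|f_0(T)|\le\|f_0\|\,w(T)\le w(T)$.
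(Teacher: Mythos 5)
Your proposal is correct and follows essentially the same route as the paper: compute ${}^o\mathcal{A}=B_{\mathcal{L}(X)_w}$ and $({}^o\mathcal{A})^o=B_{\mathcal{L}(X)_w^*}$, invoke the bipolar theorem together with the balancedness of $\mathcal{A}$ to get $B_{\mathcal{L}(X)_w^*}=\overline{co}^{w*}(\mathcal{A})$, and conclude via Milman's partial converse to the Krein--Milman theorem. The only cosmetic difference is that you verify balancedness for $|\lambda|=1$ where the full condition $|\lambda|\le 1$ is what is used (the identical computation covers it), and your closing Hahn--Banach separation remark is an optional self-contained substitute for the bipolar step rather than a departure from it.
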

\begin{proof}
	Observe that, ${}^o\mathcal{A}=B_{\mathcal{L}(X)_w}.$ Indeed 
	\begin{eqnarray*}
		{}^o\mathcal{A}&=& \Big\{T\in \mathcal{L}(X)_w:|x^*\otimes x(T)|\leq 1 ~\forall~ x^*\otimes x\in \mathcal{A}\Big\}\\
        &=& \Big\{T\in \mathcal{L}(X)_w:\sup_{x^*\otimes x\in \mathcal{A}}|x^*(Tx)|\leq 1 \Big\}\\
        &=& \Big\{T\in \mathcal{L}(X)_w:w(T)\leq 1 \Big\}\\
        &=& B_{\mathcal{L}(X)_w}.
	\end{eqnarray*}
Now,
\begin{eqnarray*}
({}^o\mathcal{A})^o&=&(B_{\mathcal{L}(X)_w})^o\\
&=&\{f\in {\mathcal{L}(X)_w}^*:|f(T)|\leq 1~\forall~f\in B_{\mathcal{L}(X)_w}\}\\
&=&\{f\in {\mathcal{L}(X)_w}^*:\|f\|\leq 1\}\\
&=&B_{\mathcal{L}(X)_w^*}.
\end{eqnarray*} 
Note that $\mathcal{A}$ is a Balanced set, i.e., for any scalar $\lambda$ with $|\lambda|\leq1,$ $\lambda \mathcal{A}\subseteq \mathcal{A}.$ Therefore, using a consequence of the Bipolar theorem (see \cite[Cor. 1.9, pp. 127]{CONW90}), we get,
\begin{equation}\label{eq-bipolar}
B_{\mathcal{L}(X)_w^*}=({}^o\mathcal{A})^o=\overline{co}^{w*}(\mathcal{A}).	
\end{equation}
 Clearly, $B_{\mathcal{L}(X)_w^*}$ is a weak*compact, convex subset of $\mathcal{L}(X)_w^*$ and $\mathcal{A}\subseteq B_{\mathcal{L}(X)_w^*}.$ Now, it follows from \cite[Th. 7.8, pp. 143]{CONW90}, that $E_{\mathcal{L}(X)_w^*}\subseteq {\overline{\mathcal{A}}}^{w*},$ completing the proof of the theorem. 
\end{proof}
In case, $X$ is considered to be finite-dimensional, the previous theorem can be strengthened further. We use the following simple lemma to get the desired result.
\begin{lemma}\label{lem-gen}
	Let $X$ be a Banach space. Let $x_1^*,x_2^*\in X^*,x_1,x_2\in X.$ Then the following are true.\\
	\rm(i) If  $x_1^*(x_1)\neq 0$ and $x_1^*\otimes x_1=x_1^*\otimes x_2,$ then $x_1=x_2.$\\
	 \rm(ii) If $x_1\neq 0$ and  $x_1^*\otimes x_1=x_2^*\otimes x_1,$ then $x_1^*=x_2^*.$
\end{lemma}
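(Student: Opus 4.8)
The lemma asserts two cancellation properties of the rank-one functionals $x^*\otimes x$ on $\mathcal{L}(X)_w$. The key observation driving both parts is that the action $x^*\otimes x(T)=x^*(Tx)$ can be probed by feeding in cleverly chosen operators $T$, and in particular by rank-one operators. The plan is to exploit the abundance of operators in $\mathcal{L}(X)$: for any fixed $\varphi\in X^*$ and $u\in X$, the operator $T=\varphi\otimes u$ (the genuine operator $x\mapsto \varphi(x)u$, not the functional) lies in $\mathcal{L}(X)$, and evaluating gives $x^*\otimes x(T)=x^*(\varphi(x)u)=\varphi(x)\,x^*(u)$.

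\medskip

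For part (i), I would start from the hypothesis $x_1^*\otimes x_1=x_1^*\otimes x_2$ as functionals on $\mathcal{L}(X)_w$, which means $x_1^*(Tx_1)=x_1^*(Tx_2)$, i.e. $x_1^*\big(T(x_1-x_2)\big)=0$, for \emph{every} $T\in\mathcal{L}(X)$. The goal is to conclude $x_1-x_2=0$. Suppose toward a contradiction that $x_1\neq x_2$, so the vector $v:=x_1-x_2$ is nonzero. Using the hypothesis $x_1^*(x_1)\neq 0$, pick any $u\in X$ with $x_1^*(u)\neq 0$ (for instance $u=x_1$). Since $v\neq 0$, by Hahn--Banach there is a functional $\varphi\in X^*$ with $\varphi(v)\neq 0$. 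Now take $T=\varphi\otimes u\in\mathcal{L}(X)$; then $x_1^*\big(T v\big)=\varphi(v)\,x_1^*(u)\neq 0$, contradicting $x_1^*(Tv)=0$. Hence $v=0$, i.e. $x_1=x_2$.

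\medskip

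For part (ii), the hypothesis $x_1^*\otimes x_1=x_2^*\otimes x_1$ says $x_1^*(Tx_1)=x_2^*(Tx_1)$, i.e. $(x_1^*-x_2^*)(Tx_1)=0$, for every $T\in\mathcal{L}(X)$. Setting $\psi:=x_1^*-x_2^*$, the goal is to show $\psi=0$. Since $x_1\neq 0$, for any target $u\in X$ we may choose $\varphi\in X^*$ with $\varphi(x_1)=1$ (Hahn--Banach) and set $T=\varphi\otimes u$, giving $Tx_1=\varphi(x_1)u=u$. Then $\psi(u)=\psi(Tx_1)=0$. As $u\in X$ was arbitrary, $\psi\equiv 0$, i.e. $x_1^*=x_2^*$.

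\medskip

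I do not anticipate a genuine obstacle here; the only point requiring a little care is the appeal to Hahn--Banach to manufacture the auxiliary functional $\varphi$ separating the relevant nonzero vector, and the recognition that rank-one operators $\varphi\otimes u$ suffice to distinguish the functionals. Both cancellation statements then reduce to a one-line evaluation against such a test operator. One should note the asymmetry in hypotheses: part (i) needs $x_1^*(x_1)\neq0$ (more precisely, only some nonvanishing of $x_1^*$, which that condition guarantees) to fix a usable $u$, while part (ii) needs $x_1\neq0$ to fix a usable $\varphi$; in each case the test operator is built so that its action isolates exactly the vector or functional we wish to cancel.
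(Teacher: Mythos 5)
Your proof is correct and follows essentially the same route as the paper: both arguments test the equality of functionals against rank-one operators $x\mapsto\varphi(x)u$, choosing $\varphi$ to separate the relevant nonzero vector (via Hahn--Banach) and $u$ so that the evaluation isolates the quantity to be cancelled. The only cosmetic difference is your normalization $\varphi(x_1)=1$ in part (ii), where the paper instead divides by the nonzero scalar $x^*(x_1)$.
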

 \begin{proof}
 (i)	If possible, let $x_1\neq x_2.$ Then there exists $x^*\in X^*$ such that $x^*(x_1)\neq x^*(x_2).$ Define $T:X\to X$ by $Tx=x^*(x)x_1$ for all $x\in X.$ Then 
 	\begin{eqnarray*}
 		x_1^*\otimes x_1=x_1^*\otimes x_2 &\Rightarrow & x_1^*\otimes x_1(T)=x_1^*\otimes x_2(T)\\
 		&\Rightarrow &x_1^*(Tx_1)=x_1^*(Tx_2)	\\
 		&\Rightarrow& x_1^*(x_1)x^*(x_1)=x_1^*(x_1)x^*(x_2)\\
 		&\Rightarrow& x^*(x_1)=x^*(x_2),~\mbox{a contradiction.}
 	\end{eqnarray*}
 	Therefore, we must have $x_1=x_2.$\\
 (ii) Choose $x^*\in X^*$ such that $x^*(x_1)\neq 0.$ Let $u\in X$ be arbitrary. Define $T:X\to X$ by $Tx=x^*(x)u$ for all $x\in X.$ Then 
 \begin{eqnarray*}
 	x_1^*\otimes x_1=x_2^*\otimes x_1 &\Rightarrow& 	x_1^*\otimes x_1(T)=x_2^*\otimes x_1(T)\\
 &\Rightarrow&	x_1^*(Tx_1)=x_2^*(Tx_1)\\
 &\Rightarrow& x_1^*(u) x^*(x_1)=x_2^*(u)x^*(x_1)\\
 &\Rightarrow& x_1^*(u)=x_2^*(u).
  \end{eqnarray*}
Since $x_1^*(u)=x_2^*(u)$ for arbitrary $u\in X,$ we have $x_1^*=x_2^*.$
 \end{proof}
Now, we are ready to prove the main result of this section. 
\begin{theorem}\label{th-exfi}
	Let $X$ be a finite-dimensional Banach space. Then 
	\begin{equation}
		E_{\mathcal{L}(X)_w^*}\subseteq\{x^*\otimes x:x^*\in E_{X^*},x\in E_{X},|x^*(x)|=1\}.
	\end{equation} 
\end{theorem}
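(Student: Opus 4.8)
The plan is to combine Theorem~\ref{th-exin} with a finite-dimensional collapse of the weak* closure, and then peel off the three required properties one at a time, using Lemma~\ref{lem-gen} as the endgame.

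First I would argue that in finite dimensions $\overline{\mathcal{A}}^{w*}=\mathcal{A}$. Since $X$ is finite-dimensional, $\mathcal{L}(X)_w^*$ is finite-dimensional and its weak* topology agrees with the norm topology. The set $\mathcal{A}$ is the image of the compact set $K=\{(x^*,x)\in B_{X^*}\times B_X:|x^*(x)|=\|x^*\|\|x\|\}$ under the continuous map $(x^*,x)\mapsto x^*\otimes x$, so $\mathcal{A}$ is compact, hence closed. Thus Theorem~\ref{th-exin} already tells us that every $f\in E_{\mathcal{L}(X)_w^*}$ is literally of the form $f=x^*\otimes x$ with $x^*\in B_{X^*}$, $x\in B_X$ and $|x^*(x)|=\|x^*\|\|x\|$. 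It remains to upgrade $x^*\in B_{X^*}$, $x\in B_X$ to extreme points and to show $|x^*(x)|=1$.

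Next I would pin down the norms. Writing $a=\|x^*\|$, $b=\|x\|$ (both positive, else $f=0$ is not extreme) and normalizing $\hat x^*=x^*/a$, $\hat x=x/b$, one has $\hat x^*\otimes\hat x\in\mathcal{A}$ and $f=ab\,(\hat x^*\otimes\hat x)$. I claim $\|\hat x^*\otimes\hat x\|=1$: the inequality $|\hat x^*(T\hat x)|\le w(T)$ for all $T$ (which holds because $\hat x^*\otimes\hat x\in\mathcal{A}$ and $w(T)=\sup\{|y^*(Ty)|:y^*\otimes y\in\mathcal{A}\}$) gives $\|\hat x^*\otimes\hat x\|\le 1$, while testing against $T=I$, for which $w(I)=1$ and $|\hat x^*\otimes\hat x(I)|=|\hat x^*(\hat x)|=1$, gives the reverse. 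Since $f$ is extreme, $\|f\|=1$, and therefore $ab=1$; as $a,b\le 1$ this forces $\|x^*\|=\|x\|=1$ and $|x^*(x)|=ab=1$.

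Finally, and this is where the real work lies, I would show $x\in E_X$ and $x^*\in E_{X^*}$. After replacing $x^*$ by a unimodular multiple (which preserves extremality of $f$ and all the quantities to be proved), I may assume $x^*(x)=1$. Suppose $x$ is not extreme, say $x=\tfrac12(x_1+x_2)$ with $x_1\neq x_2$ in $B_X$. The decisive point is that alignment is automatically inherited: from $\tfrac12\big(x^*(x_1)+x^*(x_2)\big)=x^*(x)=1$ together with $\Re\,x^*(x_i)\le|x^*(x_i)|\le\|x^*\|\|x_i\|\le 1$, comparing real parts forces $x^*(x_1)=x^*(x_2)=1$, whence $\|x_i\|=1$ and $|x^*(x_i)|=\|x^*\|\|x_i\|$, i.e. $x^*\otimes x_1,\,x^*\otimes x_2\in\mathcal{A}\subseteq B_{\mathcal{L}(X)_w^*}$. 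Then $f=\tfrac12(x^*\otimes x_1+x^*\otimes x_2)$ is a convex decomposition inside the unit ball, so extremality gives $x^*\otimes x_1=x^*\otimes x_2$; since $x^*(x_1)=1\neq 0$, Lemma~\ref{lem-gen}(i) yields $x_1=x_2$, a contradiction. The symmetric argument — if $x^*=\tfrac12(x_1^*+x_2^*)$ then $x_1^*(x)=x_2^*(x)=1$, landing $x_1^*\otimes x,\,x_2^*\otimes x$ in $\mathcal{A}$, and Lemma~\ref{lem-gen}(ii) forces $x_1^*=x_2^*$ — gives $x^*\in E_{X^*}$. The main obstacle is exactly this last step: a priori $x^*\otimes x_i$ need not lie in $B_{\mathcal{L}(X)_w^*}$, since the dual numerical-radius norm need not satisfy $\|x^*\otimes y\|\le\|x^*\|\|y\|$, so one cannot naively split the convex combination and stay in the ball. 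What rescues the argument is the observation that the extremal alignment $|x^*(x)|=1$ forces each piece to be aligned as well, which is precisely the membership condition for $\mathcal{A}$ and is what lets Lemma~\ref{lem-gen} be applied.
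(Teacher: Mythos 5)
Your proof is correct and follows essentially the same route as the paper: compactness of $\mathcal{A}$ collapses the weak* closure from Theorem~\ref{th-exin} so that every extreme point is literally of the form $x^*\otimes x$ with $|x^*(x)|=\|x^*\|\|x\|$, after which extremality of $f$ plus membership of the pieces in $\mathcal{A}$ and Lemma~\ref{lem-gen} force $x\in E_X$ and $x^*\in E_{X^*}$. The only cosmetic differences are in execution: you pin down $\|x^*\|=\|x\|=1$ by testing against $T=I$ (the paper instead invokes norm attainment $f(T)=w(T)=1$ and derives a contradiction with the definition of $w$), and you normalize the phase to $x^*(x)=1$ and compare real parts of a midpoint decomposition, where the paper runs the same inheritance argument directly with moduli in a general convex combination $tx_1+(1-t)x_2$.
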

\begin{proof}
	Suppose  $\mathcal{A}$ is defined as in (\ref{eq-seta}).  First we show that $\mathcal{A}$ is a compact set. Consider the map 
	\[\Psi:X^*\times X\to \mathcal{L}(X)_w^*\] defined as $\Psi(x^*,x)=x^*\otimes x$ for all $x^*\in X^*,x\in X,$ i.e., for any $T\in \mathcal{L}(X)_w,$
	\[\Psi(x^*,x)(T)=x^*\otimes x(T)=x^*(Tx).\]
	Note that, $B_{X^*}\times B_X$ is compact in the product topology of $X^*\times X.$ Now, if the set $U=\{(x^*,x)\in B_{X^*}\times B_X: |x^*(x)|=\|x^*\|\|x\|\}$ is a closed subset of $B_{X^*}\times B_X$ and $\Psi$ is continuous, then $\Psi(U)=\mathcal{A}$ will be compact. To show $\Psi$ is continuous, let $\{(x_\alpha^*,x_\alpha)\}$ be a net in $X^*\times X$ such that $(x_\alpha^*,x_\alpha)\to (x^*,x).$ Then $x^*_\alpha\to x^*$ and $x_\alpha \to x.$ Now, for any $T\in \mathcal{L}(X)_w,$
	\begin{eqnarray*}
		&&|\Psi(x_\alpha^*,x_\alpha)(T)-\Psi(x^*,x)(T)|\\
		&=&|x_\alpha^*(Tx_\alpha)-x^*(Tx)|\\
		&\leq& |x_\alpha^*(Tx_\alpha)-x_\alpha^*(Tx)|+|x_\alpha^*(Tx)-x^*(Tx)|\\
		&\leq& \|x_\alpha^*\|\|Tx_\alpha-Tx\|+|x_\alpha^*(Tx)-x^*(Tx)|\\
		&&\longrightarrow 0.
	\end{eqnarray*}
Thus, $\{\Psi(x_\alpha^*,x_\alpha)\}$ weak*converges to $\Psi(x^*,x).$ Since weak*convergence and norm convergence in $\mathcal{L}(X)_w^*$ are same, $\Psi(x_\alpha^*,x_\alpha)\to \Psi(x^*,x).$ This proves that $\Psi$ is continuous. To prove $U$ is closed, assume that $\{(x_\alpha^*,x_\alpha)\}$ is a net in $U$ such that $(x_\alpha^*,x_\alpha)\to (x^*,x).$ Then $|x_\alpha^*(x_\alpha)|=\|x_\alpha^*\|\|x_\alpha\|,$ $x_\alpha^*\to x^*$ and $x_\alpha\to x.$ Hence $(x^*,x)\in B_{X^*}\times B_X$ and $|x_\alpha^*(x_\alpha)|\to \|x^*\|\|x\|.$ On the other hand,
\begin{eqnarray*}
	&&||x_\alpha^*(x_\alpha)|-|x^*(x)||\\
	&\leq& |x_\alpha^*(x_\alpha)-x^*(x)|\\
	&\leq & |x_\alpha^*(x_\alpha)-x^*(x_\alpha)|+|x^*(x_\alpha)-x^*(x)|\\
	&\leq& \|x_\alpha^*-x^*\|\|x_\alpha\|+\|x^*\|\|x_\alpha-x\|\\
	&&\longrightarrow 0.
\end{eqnarray*} 
Therefore, $|x_\alpha^*(x_\alpha)|\to |x^*(x)|.$ Thus, $|x^*(x)|=\|x^*\|\|x\|.$ This proves that $(x^*,x)\in U$ and $U$ is closed. Now, from the compactness of $B_{X^*}\times B_X$ and the fact that $U\subseteq B_{X^*}\times B_X,$ it follows that $U$ is compact. Therefore, $\Psi(U)=\mathcal{A}$ is compact. Using Theorem \ref{th-exin}, we get $$E_{\mathcal{L}(X)_w^*}\subseteq {\overline{\mathcal{A}}}^{w*}=\overline{\mathcal{A}}=\mathcal{A}.$$
Now, assume that $f\in E_{\mathcal{L}(X)_w^*}.$ Then $f=x^*\otimes x$ for some $x^*\in B_{X^*},x\in B_X$ with $|x^*(x)|=\|x^*\|\|x\|.$ To complete the proof of the theorem, we only have to show that $x^*\in E_{X^*}$ and $x\in E_{X}.$ First note that $x^*\in S_{X^*}$ and $x\in S_{X}.$ For otherwise, choose $T\in S_{\mathcal{L}(X)_w}$ such that $f(T)=w(T)=1,$ i.e., $x^*\otimes x(T)=w(T).$ Then $\Big|\frac{x^*}{\|x^*\|}(T\frac{x}{\|x\|})\Big|=\frac{w(T)}{\|x^*\|\|x\|}>w(T),$ a contradiction. Therefore, $x^*\in S_{X^*}$ and $x\in S_{X}.$ Now, if possible, suppose that $x\notin E_{X}.$ Then there exist $t\in (0,1), x_1,x_2\in B_{X}$ such that $x_1\neq x\neq x_2$ and $x=tx_1+(1-t)x_2.$ Thus, from
$$1=|x^*(x)|=|tx^*(x_1)+(1-t)x^*(x_2)|\leq t|x^*(x_1)|+(1-t)|x^*(x_2)|\leq 1,$$
we get $|x^*(x_1)|=|x^*(x_2)|=1$ and $\|x_1\|=\|x_2\|=1.$ Therefore, $x^*\otimes x_1,x^*\otimes x_2\in \mathcal{A}\subseteq B_{\mathcal{L}(X)_w^*}.$ Moreover, $x^*\otimes x=tx^*\otimes x_1+(1-t)x^*\otimes x_2.$ Thus, from $x^*\otimes x\in E_{\mathcal{L}(X)_w^*},$ it follows that $x^*\otimes x=x^*\otimes x_1=x^*\otimes x_2.$ Therefore, by Lemma \ref{lem-gen}, we have $x=x_1=x_2$ and thus $x\in E_X.$ Similarly, it can be shown that $x^*\in E_{X^*}.$ This completes the proof of the theorem.
\end{proof}

\section{Best approximation and distance formula in $\mathcal{L}(X)_w$}
In this section, we  apply the results of the previous section to study best approximation and obtain distance formula in $\mathcal{L}(X)_w.$ In this direction, we first characterize $T\perp_w \mathcal{V},$ where $X$ is a finite-dimensional Banach space,  $T\in \mathcal{L}(X)_w$ and $\mathcal{V}$ is a subspace of $\mathcal{L}(X)_w.$
\begin{theorem}\label{th-nubjfi}
	Let $X$ be a finite-dimensional Banach space. Let $\mathcal{V}$ be a $n$-dimensional subspace of $\mathcal{L}(X)_w$ and $T\in \mathcal{L}(X)_w.$ Then  the following are equivalent.\\
	
\noindent	\rm(i) $T\perp_w \mathcal{V}$

\noindent	\rm(ii) There exist $t_1,t_2,\ldots,t_h>0$ ($h\leq n+1$ if $\mathbb{F}=\mathbb{R}$ and $h\leq 2n+1$ if $\mathbb{F}=\mathbb{C}$), $x_i^*\in E_{X^*},x_i\in E_X$ for all $1\leq i\leq h$ such that 
$$|x_i^*(x_i)|=1,~x_i^*(Tx_i)=w(T),~\sum_{i=1}^h t_i=1, \mbox{ and } \sum_{i=1}^ht_ix_i^*(Sx_i)=0~\mbox{for all } S\in \mathcal{V}.$$

\noindent	\rm(iii) $0\in co(D_S )$ for all $S\in \mathcal{V},$ where
	\[D_S=\{x^*(Sx):x^*\in E_{X^*},x\in E_x,|x^*(x)|=1,x^*(Tx)=w(T)\}.\]
\end{theorem}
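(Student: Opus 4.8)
The strategy is to establish the cycle of implications $(i)\Rightarrow(ii)\Rightarrow(iii)\Rightarrow(i)$, leaning on Singer's theorem (Theorem~\ref{th-singer}) to handle the core implication and on the explicit description of $E_{\mathcal{L}(X)_w^*}$ from Theorem~\ref{th-exfi} to translate abstract extreme points into the concrete form $x^*\otimes x$.

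\emph{The implication $(i)\Rightarrow(ii)$.} The plan is to recall the observation made in the introduction that $z_0\in\mathscr{L}_{\mathcal{W}}(x)$ iff $x-z_0\perp_B\mathcal{W}$; applied with $z_0=0$ this says $T\perp_w\mathcal{V}$ is exactly the statement that $0$ is a best approximation to $T$ out of $\mathcal{V}$. Since $X$ is finite-dimensional, so is $\mathcal{L}(X)_w$, and $\mathcal{V}$ is the $n$-dimensional subspace $\mathcal{W}$ required by Singer's theorem. I would therefore apply Theorem~\ref{th-singer} with $\mathbb{X}=\mathcal{L}(X)_w$, $x=T$, and $y=0$. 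This yields $h\leq n+1$ (real) or $h\leq 2n+1$ (complex) extreme points $f_1,\dots,f_h\in E_{\mathcal{L}(X)_w^*}$ and positive weights $t_i$ summing to $1$ with $\sum_i t_i f_i(S)=0$ for all $S\in\mathcal{V}$ and $f_i(T)=w(T)$ for each $i$. The key move is then to invoke Theorem~\ref{th-exfi}: each $f_i$, being an extreme point of $B_{\mathcal{L}(X)_w^*}$, must equal $x_i^*\otimes x_i$ for some $x_i^*\in E_{X^*}$, $x_i\in E_X$ with $|x_i^*(x_i)|=1$. Rewriting $f_i(T)=x_i^*(Tx_i)$ and $f_i(S)=x_i^*(Sx_i)$ gives precisely the four conditions in $(ii)$.

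\emph{The implications $(ii)\Rightarrow(iii)$ and $(iii)\Rightarrow(i)$.} For $(ii)\Rightarrow(iii)$, fix $S\in\mathcal{V}$; each scalar $x_i^*(Sx_i)$ is by definition an element of $D_S$, and $\sum_i t_i x_i^*(Sx_i)=0$ with $t_i>0$, $\sum t_i=1$ exhibits $0$ as a convex combination of points of $D_S$, i.e.\ $0\in co(D_S)$. For $(iii)\Rightarrow(i)$, I would argue directly from the definition of $\perp_w$. Fix $A\in\mathcal{V}$ and a scalar $\lambda$; I want $w(T+\lambda A)\geq w(T)$. By hypothesis $0\in co(D_A)$, so there are finitely many $(x_j^*,x_j)$ with $|x_j^*(x_j)|=1$, $x_j^*(Tx_j)=w(T)$, and convex weights $s_j>0$ with $\sum_j s_j x_j^*(Ax_j)=0$. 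Then $w(T+\lambda A)\geq |x_j^*((T+\lambda A)x_j)|$ for each $j$ (since each $x_j^*\otimes x_j\in\mathcal{A}$ computes a value bounded by the numerical radius), and taking the convex combination with weights $s_j$ and using $\sum_j s_j x_j^*(Tx_j)=w(T)$ together with $\sum_j s_j\lambda x_j^*(Ax_j)=0$ yields $w(T+\lambda A)\geq\big|\sum_j s_j x_j^*((T+\lambda A)x_j)\big|=|w(T)+\lambda\cdot 0|=w(T)$. As $A$ and $\lambda$ were arbitrary, $T\perp_w\mathcal{V}$.

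\emph{Main obstacle.} The substantive content is entirely front-loaded into $(i)\Rightarrow(ii)$: everything hinges on being able to apply Singer's theorem (which requires finite-dimensionality of $\mathcal{V}$, guaranteed here) and, crucially, on the fact that the abstract extreme points it produces can be identified with functionals of the form $x^*\otimes x$ with $x^*\in E_{X^*}$, $x\in E_X$, $|x^*(x)|=1$. That identification is exactly what Theorem~\ref{th-exfi} supplies, so the real intellectual weight of this result was discharged in Section~2. The remaining two implications are formal manipulations of convex combinations and the defining inequality of the numerical radius, and I expect no genuine difficulty there beyond bookkeeping the indices and weights carefully.
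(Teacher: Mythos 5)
Your proposal is correct and follows essentially the same route as the paper: Singer's theorem plus Theorem~\ref{th-exfi} for (i)$\Rightarrow$(ii), the trivial convex-combination observation for (ii)$\Rightarrow$(iii), and the same convex-combination computation for (iii)$\Rightarrow$(i). The only cosmetic difference is in the last step, where the paper assembles the weights into a functional $f=\sum_i t_i x_i^*\otimes x_i$, checks $\|f\|\leq 1$ and $f(T)=w(T)$, and invokes James's characterization via $f\in J_w(T)$, whereas you unwind the same estimate directly into the inequality $w(T+\lambda A)\geq w(T)$; both arguments carry identical mathematical content.
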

\begin{proof}
(i) $\Rightarrow$ (ii). Let $T\perp_w \mathcal{V}.$ Then using Theorem \ref{th-singer}, we get scalars $t_1,t_2,\ldots,t_h>0,$ where $h$ is as in (ii) and $f_1,f_2,\ldots,f_h\in E_{\mathcal{L}(X)_w^*}$ such that $\sum_{i=1}^h t_i=1,$ $f_i(T)=w(T)$ for all $1\leq i\leq h$ and  $\sum_{i=1}^h t_if_i(S)=0$ for all $S\in \mathcal{V}.$  From Theorem \ref{th-exfi}, it follows that $f_i=x_i^*\otimes x_i,$ where $x_i^*\in E_{X^*},x_i\in E_X$ and $|x_i^*(x_i)|=1.$ Thus, 
	$$x_i^*(Tx_i)=x_i^*\otimes x_i(T)=f_i(T)=w(T)~\mbox{and}$$
	\[\sum_{i=1}^ht_ix_i^*(Sx_i)=\sum_{i=1}^ht_ix_i^*\otimes x_i(S)=\sum_{i=1}^h t_if_i(S)=0~\mbox{for all } S\in \mathcal{V}.\]
	This proves (ii).\\
(ii) $\Rightarrow$ (iii) is trivial.\\
(iii) $\Rightarrow$ (i). Let $S\in \mathcal{V}.$ Then $0\in co(D_S)$ implies that there exist scalars $t_1,t_2,\ldots, t_n>0$ such that $\sum_{i=1}^n t_i=1$ and $\sum_{i=1}^n t_ix_i^*(Sx_i)=0,$ where $x_i^*(Sx_i)\in D_S.$ Suppose that $f=\sum_{i=1}^n t_ix_i^*\otimes x_i\in \mathcal{L}(X)_w^*.$ Then $f(S)=0.$  
Note that, for any $A\in \mathcal{L}(X)_w,$	
$$f(A)=\sum_{i=1}^n t_ix_i^*(Ax_i)\leq \sum_{i=1}^n t_iw(A)=w(A).$$
Thus, $\|f\|\leq 1.$ Moreover, $f(T)=w(T)$ implies that $f\in J_w(T).$ Thus, from  $f(S)=0,$ it follows that $T\perp_w S.$ Since this is true for each $S\in \mathcal{V},$ we conclude that $T\perp_w\mathcal{V}.$	
\end{proof}
\begin{remark}
We would like to note that in \cite[Th. 2.3]{RS21}, the authors characterized $T\perp_w A,$ where $T,A\in \mathcal{L}(X)_w.$ They proved that $T\perp_wA$ if and only if $0\in co(D),$ where $D=\{\overline{x^*(Tx)}x^*(Ax):x\in S_X,x^*\in S_{X^*},x^*(x)=1,|x^*(Tx)|=w(T)\}.$ Clearly, the equivalence (i) $\Leftrightarrow $ (iii) of Theorem \ref{th-nubjfi}  improves \cite[Th. 2.3]{RS21} when $\mathcal{V}$ is one-dimensional and either  $E_X\subsetneq S_X$  or  $E_{X^*}\subsetneq S_{X^*},$ which is always true if $X$ is a finite-dimensional polyhedral Banach space.
\end{remark}
In \cite{MPS}, we characterized $T\perp_w A,$ where $T,A$ are defined over Hilbert spaces. Then in \cite[Th. 2.11]{MP}, we obtained a sufficient condition for $T\perp_w \mathcal{V},$ where $T$ is defined on a complex Hilbert space $H$ and  $\mathcal{V}$ is any subspace of $\mathcal{L}(H)_w.$ In the next theorem, we prove that the sufficient condition is also necessary if $H$ is finite-dimensional.
\begin{theorem}
	Let $H$ be a finite-dimensional complex Hilbert space. Suppose that $(0\neq) T\in \mathcal{L}(H)_w$ and $\mathcal{V}$ is a $n$-dimensional subspace of $\mathcal{L}(H)_w.$ Then the following are equivalent.\\
	\rm(i) $T\perp_w \mathcal{V}.$\\
	\rm(ii) There exist $t_1,t_2,\ldots,t_h>0$ ($h\leq 2n+1$), $x_i\in S_H$ for all $1\leq i\leq h$ such that $|\langle Tx_i,x_i\rangle|=w(T)$ for all $1\leq i\leq h,$ $\sum_{i=1}^n t_i=1$ and 
	\[\sum_{i=1}^n t_i \overline{\langle Tx_i,x_i\rangle}\langle Sx_i,x_i\rangle=0~\mbox{for all } S\in\mathcal{V}.\] 
\end{theorem}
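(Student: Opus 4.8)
The plan is to reduce the statement to Theorem \ref{th-nubjfi} by regarding the finite-dimensional complex Hilbert space $H$ as a finite-dimensional complex Banach space, and then to translate the Banach-space data produced by that theorem into their Hilbert-space form via the Riesz representation theorem. The crucial observation is that in a Hilbert space the unit ball is strictly convex, so $E_H=E_{H^*}=S_H$, and that the constraint $|x^*(x)|=\|x^*\|\|x\|=1$ occurring in Theorem \ref{th-nubjfi}, after identifying $x^*$ with its Riesz vector $y_i$ (a unit vector), forces equality in Cauchy--Schwarz and hence $y_i=\mu_i x_i$ for some unimodular scalar $\mu_i$. This is the step that collapses each pair $(x_i^*,x_i)$ into the single vector $x_i$ and converts $x_i^*(Ax_i)=\langle Ax_i,y_i\rangle$ into $\overline{\mu_i}\,\langle Ax_i,x_i\rangle$. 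The main point requiring care is not a genuine obstacle but the bookkeeping of these phases $\mu_i$: one must verify that the induced phase combines with $\langle Tx_i,x_i\rangle$ to produce precisely the conjugate weight $\overline{\langle Tx_i,x_i\rangle}$ appearing in (ii), and that $w(T)\neq 0$ (which holds since $T\neq 0$) so that the divisions by $w(T)$ below are legitimate.

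For (i) $\Rightarrow$ (ii), I would apply Theorem \ref{th-nubjfi} to obtain weights $t_1,\dots,t_h>0$ with $h\leq 2n+1$, together with $x_i^*\in E_{H^*}$ and $x_i\in E_H$ satisfying $|x_i^*(x_i)|=1$, $x_i^*(Tx_i)=w(T)$, $\sum_i t_i=1$, and $\sum_i t_i\,x_i^*(Sx_i)=0$ for all $S\in\mathcal{V}$. Writing $x_i^*(\cdot)=\langle\,\cdot\,,\mu_i x_i\rangle$ with $|\mu_i|=1$ as above, the relation $x_i^*(Tx_i)=w(T)$ reads $\overline{\mu_i}\,\langle Tx_i,x_i\rangle=w(T)$; taking moduli yields $|\langle Tx_i,x_i\rangle|=w(T)$ and at the same time determines $\overline{\mu_i}=\overline{\langle Tx_i,x_i\rangle}/w(T)$. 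Substituting this value into $\sum_i t_i\,x_i^*(Sx_i)=\sum_i t_i\,\overline{\mu_i}\,\langle Sx_i,x_i\rangle=0$ and clearing the common factor $1/w(T)$ gives exactly $\sum_i t_i\,\overline{\langle Tx_i,x_i\rangle}\,\langle Sx_i,x_i\rangle=0$ for all $S\in\mathcal{V}$, which is (ii).

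For the converse (ii) $\Rightarrow$ (i), I would run the computation in reverse by building an explicit norming functional. Setting $\lambda_i=\overline{\langle Tx_i,x_i\rangle}/w(T)$, one has $|\lambda_i|=|\langle Tx_i,x_i\rangle|/w(T)=1$, so I define $f\in\mathcal{L}(H)_w^*$ by $f(A)=\sum_i t_i\,\lambda_i\,\langle Ax_i,x_i\rangle$. The triangle inequality together with $|\langle Ax_i,x_i\rangle|\leq w(A)$ gives $|f(A)|\leq\sum_i t_i\,w(A)=w(A)$, so $\|f\|\leq 1$; a direct evaluation gives $f(T)=\sum_i t_i\,|\langle Tx_i,x_i\rangle|^2/w(T)=w(T)\sum_i t_i=w(T)$, whence $f\in J_w(T)$. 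The hypothesis in (ii) forces $f(S)=0$ for every $S\in\mathcal{V}$. Consequently, for any scalar $\lambda$ and any $S\in\mathcal{V}$ one obtains $w(T+\lambda S)\geq|f(T+\lambda S)|=|f(T)|=w(T)$, so $T\perp_w S$, and since $S\in\mathcal{V}$ was arbitrary we conclude $T\perp_w\mathcal{V}$.
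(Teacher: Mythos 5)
Your proposal is correct and follows essentially the paper's approach: for (i) $\Rightarrow$ (ii) both arguments apply Theorem \ref{th-nubjfi} and then identify each $x_i^*$ with the inner-product functional against a unimodular multiple of $x_i$ (the paper deduces this from smoothness of $H$ via $J(x_i)$ being a singleton, you from equality in Cauchy--Schwarz --- the same fact), tracking the phase $\mu_i$ so that it becomes $\overline{\langle Tx_i,x_i\rangle}/w(T)$. The only deviation is in (ii) $\Rightarrow$ (i), where the paper simply cites \cite[Th. 2.11]{MP} while you construct the norming functional $f(A)=\sum_i t_i\lambda_i\langle Ax_i,x_i\rangle$ directly; this self-contained argument is sound and is the same mechanism as the (iii) $\Rightarrow$ (i) step of Theorem \ref{th-nubjfi}, so it is a presentational rather than a substantive difference.
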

\begin{proof}
(ii) $\Rightarrow$ (i) follows from \cite[Th. 2.11]{MP}. \\
 (i) $\Rightarrow$ (ii). Let $T\perp_w \mathcal{V}.$ Then from Theorem \ref{th-nubjfi}, we get scalars $t_i>0,$ $x_i^*\in S_{H^*},x_i\in S_H$ for $1\leq i\leq h$ ($h\leq 2n+1$) such that  
 $$|x_i^*(x_i)|=1,~x_i^*(Tx_i)=w(T),~\sum_{i=1}^h t_i=1, \mbox{ and } \sum_{i=1}^ht_ix_i^*(Sx_i)=0~\mbox{for all } S\in \mathcal{V}.$$
 Suppose that $x_i^*(x_i)=\mu_i,$ i.e., $|\mu_i|=1$ and $\overline{\mu_i} x_i^*(x_i)=1.$ Thus, $\overline{\mu_i}x_i^*\in J(x_i).$ Note that, $H$ is smooth, i.e., $J(x_i)=\{\overline{\mu_i}x_i^*\}.$  Therefore, $\overline{\mu_i}x_i^*(x)=\langle x,x_i\rangle\Rightarrow x_i^*(x)=\mu_i \langle x,x_i\rangle$ for all $x\in H.$	Now,
 \[\overline{\langle Tx_i,x_i\rangle}=\mu_ix_i^*(Tx_i)=\mu_iw(T).\]
 Thus, $|\langle Tx_i,x_i\rangle|=w(T)$ and for all $S\in\mathcal{V},$
 \begin{eqnarray*}
 	\sum_{i=1}^ht_ix_i^*(Sx_i)=0
 	&\Rightarrow&\sum_{i=1}^ht_i \mu_i \langle Sx_i, x_i\rangle=0\\
 	&\Rightarrow&\sum_{i=1}^ht_i \mu_i w(T) \langle Sx_i, x_i\rangle=0\\
 	&\Rightarrow&\sum_{i=1}^ht_i \overline{\langle Tx_i,x_i\rangle} \langle Sx_i, x_i\rangle=0.
 \end{eqnarray*}
This proves (ii).
\end{proof}
In the next theorem, we generalize Theorem \ref{th-nubjfi}.  In particular, we characterize $T\perp_w \mathcal{V},$ for $T\in \mathcal{L}(X)_w$ and a subspace $\mathcal{V}$ of $\mathcal{L}(X)_w,$ whenever $X$ is an arbitrary Banach space, not necessarily finite-dimensional.
\begin{theorem}\label{th-nubjin}
	Let $X$ be a Banach space. Let $\mathcal{V}$ be a $n$-dimensional subspace of $\mathcal{L}(X)_w$ and $T\in \mathcal{L}(X)_w.$ Then $T\perp_w \mathcal{V}$ if and only if the following are true.\\
	
	\noindent	\rm(i) There exist scalars $t_1,t_2,\ldots,t_h>0$ ($h\leq n+1$ if $\mathbb{F}=\mathbb{R}$ and $h\leq 2n+1$ if $\mathbb{F}=\mathbb{C}$) such that $\sum_{i=1}^h t_i=1.$
	
	\noindent	\rm(ii)  For each $1\leq i\leq h,$ there exists a net $\{x_{i\alpha}^*\otimes x_{i\alpha}\}_\alpha\in \mathcal{A}$ such that $$\lim_{\alpha}x^*_{i\alpha}(Tx_{i\alpha})=w(T)~\text{and}~\sum_{i=1}^h t_i\lim_{\alpha}x^*_{i\alpha}(Sx_{i\alpha})=0~\mbox{for all } S\in \mathcal{V}.$$
\end{theorem}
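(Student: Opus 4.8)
The plan is to mirror the proof of Theorem \ref{th-nubjfi}, but with Theorem \ref{th-exin} (which gives only $E_{\mathcal{L}(X)_w^*}\subseteq\overline{\mathcal{A}}^{w*}$) playing the role that Theorem \ref{th-exfi} played in the finite-dimensional case. Since in the infinite-dimensional setting an extreme point of $B_{\mathcal{L}(X)_w^*}$ need not have the exact form $x^*\otimes x$, each such extreme point can only be approximated by a net from $\mathcal{A}$; this is exactly why the clean equalities of Theorem \ref{th-nubjfi}(ii) are replaced here by the net-limits appearing in condition (ii). Throughout I use the equivalence of weak* and norm convergence is \emph{not} available now, so all the bookkeeping is done at the level of weak* limits.

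For the forward implication, I would first observe that $T\perp_w\mathcal{V}$ is equivalent to $0\in\mathscr{L}_{\mathcal{V}}(T)_w$, so I may apply Singer's theorem (Theorem \ref{th-singer}) in the Banach space $\mathcal{L}(X)_w$ with $x=T$ and $\mathcal{W}=\mathcal{V}$. (If $T\neq 0$, then $T\perp_w\mathcal{V}$ already forces $T\notin\mathcal{V}$, since $T\in\mathcal{V}$ would give $T\perp_w T$, hence $w(T)=0$; the remaining case $T=0$ is degenerate and treated separately.) Singer's theorem yields extreme points $f_1,\dots,f_h\in E_{\mathcal{L}(X)_w^*}$ with the stated bound on $h$, together with weights $t_i>0$ satisfying $\sum_i t_i=1$, $f_i(T)=w(T)$ for each $i$, and $\sum_i t_i f_i(S)=0$ for all $S\in\mathcal{V}$; this already establishes (i). To obtain (ii), I invoke Theorem \ref{th-exin}: each $f_i$ lies in $\overline{\mathcal{A}}^{w*}$, so there is a net $\{x_{i\alpha}^*\otimes x_{i\alpha}\}_\alpha$ in $\mathcal{A}$ converging to $f_i$ in the weak* topology. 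Weak* convergence means $x_{i\alpha}^*(Ax_{i\alpha})=(x_{i\alpha}^*\otimes x_{i\alpha})(A)\to f_i(A)$ for every $A\in\mathcal{L}(X)_w$; taking $A=T$ gives $\lim_\alpha x_{i\alpha}^*(Tx_{i\alpha})=w(T)$, and taking $A=S\in\mathcal{V}$ and combining with $\sum_i t_i f_i(S)=0$ gives the second requirement of (ii).

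For the converse, since $\mathcal{V}$ is a subspace it suffices to prove $w(T+A)\ge w(T)$ for every $A\in\mathcal{V}$. Write $c_i(A):=\lim_\alpha x_{i\alpha}^*(Ax_{i\alpha})$, which exists by hypothesis (ii) and satisfies $\sum_i t_i c_i(A)=0$. Because each $x_{i\alpha}^*\otimes x_{i\alpha}\in\mathcal{A}\subseteq B_{\mathcal{L}(X)_w^*}$, we have $|x_{i\alpha}^*((T+A)x_{i\alpha})|\le w(T+A)$ for every $\alpha$; passing to the limit along the net and using $\lim_\alpha x_{i\alpha}^*(Tx_{i\alpha})=w(T)$ yields $|w(T)+c_i(A)|\le w(T+A)$ for each $i$. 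Finally, since $\sum_i t_i\bigl(w(T)+c_i(A)\bigr)=w(T)\sum_i t_i+\sum_i t_i c_i(A)=w(T)$ is a nonnegative real number, the triangle inequality gives $w(T)=\bigl|\sum_i t_i(w(T)+c_i(A))\bigr|\le\sum_i t_i\,|w(T)+c_i(A)|\le w(T+A)$, as required.

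The main obstacle, relative to the finite-dimensional Theorem \ref{th-nubjfi}, is precisely that the extreme points are no longer literal elements of $\mathcal{A}$ but only weak* limits of nets therein, so Lemma \ref{lem-gen} and the compactness argument of Theorem \ref{th-exfi} are unavailable and every identity must be phrased in terms of net-limits. The delicate point in the converse is to recognize that $\sum_i t_i(w(T)+c_i(A))$ is real and equals $w(T)$, which is exactly what allows the triangle inequality to close the argument uniformly in both the real and complex cases; one should also confirm at the outset that the limits $c_i(A)$ genuinely exist for each $A\in\mathcal{V}$, as this is built into the meaning of condition (ii).
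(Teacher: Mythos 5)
Your proof is correct and follows essentially the same route as the paper: the forward direction uses Singer's theorem (Theorem \ref{th-singer}) together with Theorem \ref{th-exin} to produce weak*-approximating nets from $\mathcal{A}$ for the extreme functionals, and the converse bounds the net limits by $w(T+S)$ and uses the vanishing weighted sum to recover $w(T+S)\geq w(T)$. The only cosmetic difference is that you finish the converse via the triangle inequality on moduli where the paper takes real parts, and you are slightly more careful than the paper about the degenerate cases $T=0$ and $T\in\mathcal{V}$; neither difference changes the substance of the argument.
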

\begin{proof}
	First suppose that $T\perp_w \mathcal{V}.$ Then by Theorem \ref{th-singer}, we get  $t_1,t_2,\ldots,t_h>0$ satisfying (i)  and $f_1,f_2,\ldots,f_h\in E_{\mathcal{L}(X)_w^*}$ such that $f_i(T)=w(T)$ for all $1\leq i\leq h$ and $\sum_{i=1}^h t_if_i(S)=0$ for all  $S\in \mathcal{V}.$  Now, from Theorem \ref{th-exin}, it follows that $f_i\in \overline{\mathcal{A}}^{w*}.$ Therefore, there exists a net  $\{x_{i\alpha}^*\otimes x_{i\alpha}\}_\alpha\in \mathcal{A}$ such that the net $\{x_{i\alpha}^*\otimes x_{i\alpha}\}_\alpha$ is weak*convergent to $f_i.$ (Note that we can choose same index set for the nets for all  $1\leq i\leq h$). Hence,  $$\lim_{\alpha}x_{i\alpha}^*(Tx_{i\alpha})=f_i(T)=w(T) ~\text{and}$$  $$\lim_{\alpha}x_{i\alpha}^*(Sx_{i\alpha})=f_i(S)\Rightarrow \sum_{i=1}^h t_i\lim_{\alpha}x^*_{i\alpha}(Sx_{i\alpha})=\sum_{i=1}^h t_if_i(S)=0~\mbox{for all } S\in \mathcal{V} .$$
	Conversely, assume that (i) and (ii) are true. Then for any $S\in \mathcal{V},$
	\begin{eqnarray*}
			w(T+S)&\geq& |x_{i\alpha}^*\otimes x_{i\alpha}(T+S)|\\
		&=& |x_{i\alpha}^*((T+S) x_{i\alpha})|\\
		&\geq & \Re\Big(x_{i\alpha}^*((T+S) x_{i\alpha})\Big)\\
		\Rightarrow w(T+S)&\geq & \lim_{\alpha} \Re\Big(x_{i\alpha}^*(T x_{i\alpha})\Big)+\lim_{\alpha} \Re\Big(x_{i\alpha}^*(S x_{i\alpha})\Big)\\  
		\Rightarrow \sum_{i=1}^h t_i w(T+S)&\geq & \sum_{i=1}^h t_i\lim_{\alpha} \Re\Big(x_{i\alpha}^*(T x_{i\alpha})\Big)+\sum_{i=1}^h t_i\lim_{\alpha} \Re\Big(x_{i\alpha}^*(S x_{i\alpha})\Big)\\ 
		\Rightarrow w(T+S)&\geq & \sum_{i=1}^h t_i\lim_{\alpha} \Re\Big(x_{i\alpha}^*(T x_{i\alpha})\Big)\\
		\Rightarrow w(T+S)&\geq & \sum_{i=1}^h t_i w(T)=w(T).
	\end{eqnarray*}	
Thus, $T\perp_w S$ for all $S\in \mathcal{V},$ which proves that $T\perp_w\mathcal{V}.$ 
\end{proof}		
As an immediate application of Theorem \ref{th-nubjfi}, we obtain the following distance formula in $\mathcal{L}(X)_w.$
\begin{theorem}\label{th-dis}
	Let $X$ be a finite-dimensional Banach space. Let $\mathcal{V}$ be a $n$-dimensional subspace of $\mathcal{L}(X)_w$ and $T\in \mathcal{L}(X)_w\setminus \mathcal{V}.$ Then there exist scalars $t_1,t_2,\ldots,t_h>0,$ ($h\leq n+1$ if $\mathbb{F}=\mathbb{R}$ and $h\leq 2n+1$ if $\mathbb{F}=\mathbb{C}$) such that $\sum_{i=1}^ht_i=1$ and  
	\begin{eqnarray*}
		& & d_w(T,\mathcal{V})  \\
		&=& \max\Big\{\sum_{i=1}^ht_ix_i^{*}(Tx_i):x_i^*\in E_{\mathbb{X}^*},~  x_i\in E_{\mathbb{X}},~|x_i^*(x_i)|=1,\sum_{i=1}^ht_i\Im(x_i^*(Tx_i))=0,\\
		&& \mbox{ \hspace{7.3cm}} \sum_{i=1}^ht_ix_i^{*}(Sx_i)=0 ~\forall~ S\in \mathcal{V}\Big\}.
	\end{eqnarray*} 
\end{theorem}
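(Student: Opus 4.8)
The plan is to exploit the standard link between best approximation and Birkhoff-James orthogonality, namely that $S_0\in\mathscr{L}_{\mathcal{V}}(T)_w$ precisely when $(T-S_0)\perp_w\mathcal{V}$, and then feed the orthogonality data produced by Theorem \ref{th-nubjfi} into the distance formula. Since both $X$ and $\mathcal{V}$ are finite-dimensional, $\mathscr{L}_{\mathcal{V}}(T)_w$ is nonempty, so I would fix a best approximation $S_0$, giving $d_w(T,\mathcal{V})=w(T-S_0)$ and $(T-S_0)\perp_w\mathcal{V}$. Note that $T\notin\mathcal{V}$ and $S_0\in\mathcal{V}$ force $T-S_0\notin\mathcal{V}$, so Theorem \ref{th-nubjfi} legitimately applies to $T-S_0$.

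Applying the implication (i) $\Rightarrow$ (ii) of Theorem \ref{th-nubjfi} to $T-S_0$ yields weights $t_1,\ldots,t_h>0$ (with the stated bound on $h$) summing to $1$, together with $x_i^*\in E_{X^*}$ and $x_i\in E_X$ satisfying $|x_i^*(x_i)|=1$, such that $x_i^*((T-S_0)x_i)=w(T-S_0)$ for every $i$ and $\sum_i t_ix_i^*(Sx_i)=0$ for all $S\in\mathcal{V}$. These are exactly the $t_i$ whose existence the theorem asserts. I would then verify this configuration meets every constraint of the maximization and realizes the value $d_w(T,\mathcal{V})$. Because each $x_i^*((T-S_0)x_i)=w(T-S_0)$ is a nonnegative real, its imaginary part vanishes, so $\Im(x_i^*(Tx_i))=\Im(x_i^*(S_0x_i))$; summing against the weights and using $\sum_i t_ix_i^*(S_0x_i)=0$ gives $\sum_i t_i\Im(x_i^*(Tx_i))=0$, the imaginary-part constraint. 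Writing $x_i^*(Tx_i)=x_i^*((T-S_0)x_i)+x_i^*(S_0x_i)$ and again invoking $\sum_i t_ix_i^*(S_0x_i)=0$ with $\sum_i t_i=1$ shows $\sum_i t_ix_i^*(Tx_i)=w(T-S_0)=d_w(T,\mathcal{V})$, so the right-hand maximum is at least $d_w(T,\mathcal{V})$.

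For the reverse inequality, which I expect to be the crux, I would take an arbitrary admissible tuple $x_1^*,\ldots,x_h^*,x_1,\ldots,x_h$ meeting the constraints for these fixed $t_i$ and show $\sum_i t_ix_i^*(Tx_i)\le d_w(T,\mathcal{V})$. The key observation is that the vanishing of $\sum_i t_i\Im(x_i^*(Tx_i))$ forces $\sum_i t_ix_i^*(Tx_i)$ to be real, hence equal to its own real part. For any $S\in\mathcal{V}$ the constraint $\sum_i t_ix_i^*(Sx_i)=0$ lets me replace $T$ by $T-S$, and since each $x_i^*\otimes x_i\in\mathcal{A}$ one has $\Re(x_i^*((T-S)x_i))\le|x_i^*((T-S)x_i)|\le w(T-S)$. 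Summing against the weights yields $\sum_i t_ix_i^*(Tx_i)\le w(T-S)$, and taking the infimum over $S\in\mathcal{V}$ gives the bound $d_w(T,\mathcal{V})$. Combining the two inequalities establishes the formula and confirms the maximum is attained at the configuration coming from $S_0$.
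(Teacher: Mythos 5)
Your proof is correct, but it takes a genuinely different route from the paper's. The paper disposes of Theorem \ref{th-dis} in a few lines: it quotes the abstract distance formula of \cite[Th. 2.1]{MP22}, which already expresses $d_w(T,\mathcal{V})$ as a constrained maximum of $\sum_i t_i g_i(T)$ over extreme points $g_i\in E_{\mathcal{L}(X)_w^*}$, and then substitutes the form $g_i=x_i^*\otimes x_i$ supplied by Theorem \ref{th-exfi}. You instead rebuild the formula from ingredients internal to the paper: proximinality of the finite-dimensional subspace $\mathcal{V}$ gives a best approximation $S_0$, the standard equivalence $S_0\in\mathscr{L}_{\mathcal{V}}(T)_w\Leftrightarrow T-S_0\perp_w\mathcal{V}$ lets you invoke Theorem \ref{th-nubjfi}(ii) for $T-S_0$ (you correctly note $T-S_0\notin\mathcal{V}$, which is what legitimizes the underlying use of Singer's theorem), this produces the weights $t_i$ and an admissible tuple attaining the value $w(T-S_0)=d_w(T,\mathcal{V})$, and a direct estimate bounds every admissible tuple by $\inf_{S\in\mathcal{V}}w(T-S)$. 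What your route buys, beyond being self-contained, is that your reverse inequality is not redundant: the paper's substitution argument by itself only shows that the maximum over tuples arising from $E_{\mathcal{L}(X)_w^*}$ is dominated by the supremum over \emph{all} tuples $(x_i^*,x_i)$ with $x_i^*\in E_{X^*}$, $x_i\in E_X$, $|x_i^*(x_i)|=1$; since this larger family of functionals lies in $\mathcal{A}$ but need not consist of extreme points of $B_{\mathcal{L}(X)_w^*}$, one still needs precisely your computation
\[
\sum_{i=1}^h t_i x_i^{*}(Tx_i)=\sum_{i=1}^h t_i x_i^{*}\bigl((T-S)x_i\bigr)=\sum_{i=1}^h t_i \Re\bigl(x_i^{*}((T-S)x_i)\bigr)\leq w(T-S)
\]
(using the reality forced by the constraint $\sum_i t_i\Im(x_i^*(Tx_i))=0$ and $x_i^*\otimes x_i\in\mathcal{A}$) to see that the enlarged supremum does not exceed $d_w(T,\mathcal{V})$ and is attained. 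What the paper's route buys is brevity, at the cost of leaning on the external result of \cite{MP22} and leaving that last containment implicit.
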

\begin{proof}
	From \cite[Th. 2.1]{MP22} it follows that there exist scalars $t_1,\ldots,t_h$ as stated in the theorem such that 
		\begin{eqnarray*}
		& & d_w(T,\mathcal{V})  \\
		&=& \max\Big\{\sum_{i=1}^ht_ig_i(T): g_i\in E_{\mathcal{L}(X)_w^*},\sum_{i=1}^ht_i\Im(g_i(T))=0, \sum_{i=1}^ht_ig_i(S)=0 ~\forall~ S\in \mathcal{V}\Big\}.
	\end{eqnarray*}
Since $g_i\in E_{\mathcal{L}(X)_w^*},$ using Theorem \ref{th-exfi}, we get $g_i=x_i^*\otimes x_i,$ where $x_i^*\in E_{X^*},x_i\in E_X$ and $|x_i^*(x_i)|=1.$ Now, the proof follows simply observing that 
\[g_i(T)=x_i^*\otimes x_i(T)=x_i^*(Tx_i)~\text{and}~g_i(S)=x_i^*\otimes x_i(S)=x_i^*(Sx_i).\]
\end{proof}
In particular, if $X$ is finite-dimensional real Banach space and $\mathcal{V}$ is one-dimensional, then Theorem \ref{th-dis} assumes the following form.  
\begin{cor}
		Let $X$ be a finite-dimensional real Banach space. Let  $T,S\in \mathcal{L}(X)_w$ such that $T\notin span\{S\}.$ Then 
			\begin{eqnarray*}
			& & d_w(T,span\{S\})  \\
			& =& \max\Big\{t x_1^*(Tx_1)+(1-t)x_2^*(Tx_2) ~{\textbf :}  ~t\in[0,1],  x_i\in E_{\mathbb{X}}, x_i^*\in E_{\mathbb{X}^*},|x_i^*(x_i)|=1,\\
			&~& \mbox{\hspace{5.5cm}}i=1,2, tx_1^*(Sx_1)+(1-t)x_2^*(Sx_2)=0\Big\}.
		\end{eqnarray*}
\end{cor}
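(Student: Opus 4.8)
The plan is to obtain this as the specialization of Theorem~\ref{th-dis} to the case $\mathbb{F}=\mathbb{R}$ and $\mathcal{V}=span\{S\}$ (so $n=1$), supplemented by a short self-contained argument for the upper bound. Write $M$ for the maximum on the right-hand side; the goal is to show $d_w(T,span\{S\})=M$ by proving the two inequalities $M\le d_w(T,span\{S\})$ and $M\ge d_w(T,span\{S\})$ separately.

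First I would record the simplifications that occur in this setting. Since $X$ is real, each quantity $x_i^*(Tx_i)$ is a real number, so the constraint $\sum_i t_i\Im(x_i^*(Tx_i))=0$ appearing in Theorem~\ref{th-dis} is automatically satisfied and may be discarded. Since $\mathcal{V}$ is one-dimensional, the requirement $\sum_i t_i x_i^*(S'x_i)=0$ for all $S'\in span\{S\}$ collapses, upon writing $S'=\lambda S$, to the single scalar equation $\sum_i t_i x_i^*(Sx_i)=0$. Finally, because $n=1$, Theorem~\ref{th-dis} furnishes at most $h\le 2$ extreme points; relabelling $t_1=t$, $t_2=1-t$ and allowing the degenerate endpoints $t\in\{0,1\}$ to absorb the case $h=1$, the admissible data become exactly the pairs $(x_i^*,x_i)$ with $x_i^*\in E_{X^*}$, $x_i\in E_X$, $|x_i^*(x_i)|=1$ $(i=1,2)$ together with $t\in[0,1]$ satisfying $tx_1^*(Sx_1)+(1-t)x_2^*(Sx_2)=0$, which is precisely the feasible set in the Corollary.

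For the inequality $M\le d_w(T,span\{S\})$ I would argue directly, independently of the exact optimal weights produced by Theorem~\ref{th-dis}. Given any feasible datum $(t,x_1^*,x_1,x_2^*,x_2)$, set $f=t\,x_1^*\otimes x_1+(1-t)\,x_2^*\otimes x_2\in\mathcal{L}(X)_w^*$. Each $x_i^*\otimes x_i$ lies in $\mathcal{A}\subseteq B_{\mathcal{L}(X)_w^*}$, so $\|f\|\le t+(1-t)=1$, while the constraint gives $f(S)=0$. Hence for every scalar $\lambda$ we have $f(T)=f(T-\lambda S)\le\|f\|\,w(T-\lambda S)\le w(T-\lambda S)$, and taking the infimum over $\lambda$ yields $tx_1^*(Tx_1)+(1-t)x_2^*(Tx_2)=f(T)\le d_w(T,span\{S\})$. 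As this holds for every feasible datum, $M\le d_w(T,span\{S\})$.

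For the reverse inequality I would invoke Theorem~\ref{th-dis} to produce weights and extreme points attaining $d_w(T,span\{S\})$; by the identifications of the second paragraph these constitute a feasible datum for the Corollary's maximum with objective value $d_w(T,span\{S\})$, whence $M\ge d_w(T,span\{S\})$. Combining the two inequalities gives $M=d_w(T,span\{S\})$. I expect the only delicate point to be the bookkeeping in the second step, namely verifying that the single parameter $t\in[0,1]$ faithfully encodes both the $h=1$ and $h=2$ cases of Theorem~\ref{th-dis} without enlarging the feasible set beyond what the direct bound of the third paragraph already controls; the norm estimate $\|f\|\le 1$ and the orthogonality computation $f(T)=f(T-\lambda S)$ are otherwise routine.
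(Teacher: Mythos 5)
Your proposal is correct, and it follows the route the paper intends: the paper offers no written proof of this corollary at all, presenting it as an immediate specialization of Theorem \ref{th-dis} to $\mathbb{F}=\mathbb{R}$, $\mathcal{V}=span\{S\}$, $n=1$. Your second and fourth paragraphs carry out exactly that specialization (vacuity of the imaginary-part constraint, reduction of the subspace condition to the single equation $f(S)=0$, and $h\le 2$ encoded by $t\in[0,1]$ with the endpoints absorbing $h=1$). What you add beyond the paper is genuinely needed, though: Theorem \ref{th-dis} fixes the weights $t_1,\ldots,t_h$ and takes the maximum only over the extreme-point data, whereas the corollary's maximum ranges over $t\in[0,1]$ as well, so its feasible set is strictly larger and one must check the enlarged maximum does not exceed $d_w(T,span\{S\})$. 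Your direct bound — $f=t\,x_1^*\otimes x_1+(1-t)\,x_2^*\otimes x_2$ has $\|f\|\le 1$ since each $x_i^*\otimes x_i\in\mathcal{A}$, and $f(S)=0$ gives $f(T)=f(T-\lambda S)\le w(T-\lambda S)$ for all $\lambda$ — closes this gap cleanly, and is the same device the paper itself uses in the proofs of Theorem \ref{th-nubjfi} (iii)$\Rightarrow$(i) and Theorem \ref{th-smoothdis}. One cosmetic remark: in the $h=1$ case you should note that a dummy pair $(x_2^*,x_2)$ with $x_2\in E_X$, $x_2^*\in E_{X^*}$, $|x_2^*(x_2)|=1$ always exists (take $x_2\in E_X$ by Krein--Milman and $x_2^*$ an extreme point of the extremal set $J(x_2)$), but since the weight $1-t=0$ kills its contribution this is harmless.
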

In the next part of this section, we give special attention to the nu-smooth operators. We show that if $S$ is a best approximation to $T$ out of a subspace $\mathcal{V}$ such that $T-S$ is a nu-smooth operator, then the distance formula $d_w(T,\mathcal{V})$ assumes a simple form compared to Theorem \ref{th-dis}. Before that, let us first characterize nu-smooth operators. The characterization of nu-smooth operators depends on the numerical radius attainment set of $T,$ denoted as $M_{w(T)}$ and defined as  
\[M_{w(T)}=\{(x,x^*)\in S_X\times S_{X^*}:x^*(x)=1,|x^*(Tx)|=w(T)\}.\]

\begin{theorem}\label{th-smooth}
		Let $X$ be a finite-dimensional  Banach space. Let  $T\in \mathcal{L}(X)_w.$ Then the following are equivalent.\\
		\rm(i) $T$ is nu-smooth.\\
		\rm(ii) $M_{w(T)}=\{(\mu x_0,\overline{\mu} x_0^*):|\mu|=1\}$ for some $x_0^*\in E_{X^*},x_0\in E_X$ with $x_0^*(x_0)=1.$
\end{theorem}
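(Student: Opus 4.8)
The plan is to study the face $J_w(T)$ of $B_{\mathcal{L}(X)_w^*}$ through its extreme points. Since $X$ is finite-dimensional, $B_{\mathcal{L}(X)_w^*}$ is compact and convex, and $J_w(T)$ is a nonempty compact convex \emph{face} of it: because $w(T)=\sup_{\|f\|\le1}|f(T)|=\sup_{\|f\|\le1}\Re f(T)$, the set $J_w(T)$ is precisely where the real-linear functional $f\mapsto\Re f(T)$ attains its maximum $w(T)$ over $B_{\mathcal{L}(X)_w^*}$. Consequently every extreme point of $J_w(T)$ is an extreme point of $B_{\mathcal{L}(X)_w^*}$, so by Theorem \ref{th-exfi} it has the form $x^*\otimes x$ with $x^*\in E_{X^*}$, $x\in E_X$, $|x^*(x)|=1$, and $x^*(Tx)=w(T)$. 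The whole argument rests on translating such rank-one functionals into elements of $M_{w(T)}$ and back, using the identity $(\overline{\mu}x^*)\otimes x=\overline{\mu}\,(x^*\otimes x)$ and the fact that multiplication by a unimodular scalar preserves membership in $E_{X^*}$. Throughout I assume $T\neq0$, so $w(T)>0$ (for $T=0$ neither condition can hold once $\dim\mathcal{L}(X)_w\ge2$).

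For (ii) $\Rightarrow$ (i) I would take an arbitrary extreme point $g=y^*\otimes y$ of $J_w(T)$. Writing $\nu=y^*(y)$ (so $|\nu|=1$), the pair $(y,\overline{\nu}y^*)$ lies in $M_{w(T)}$, hence by hypothesis equals $(\mu x_0,\overline{\mu}x_0^*)$ for some $|\mu|=1$; this gives $y=\mu x_0$ and $y^*=\nu\overline{\mu}x_0^*$. Substituting back and simplifying the tensor shows $g=\nu\,(x_0^*\otimes x_0)$, and the constraint $g(T)=w(T)$ together with $|x_0^*(Tx_0)|=w(T)$ (valid since $(x_0,x_0^*)\in M_{w(T)}$) pins down $\nu$ uniquely. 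Thus $J_w(T)$ has a single extreme point; being a compact convex set equal to the convex hull of its extreme points, $J_w(T)$ is a singleton and $T$ is nu-smooth.

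For (i) $\Rightarrow$ (ii), given $J_w(T)=\{f_0\}$, the point $f_0$ is its own unique extreme point, so $f_0=a^*\otimes a$ as above. I would set $x_0=a$ and $x_0^*=\overline{a^*(a)}\,a^*$, which lie in $E_X,E_{X^*}$ and satisfy $x_0^*(x_0)=1$ and $|x_0^*(Tx_0)|=w(T)$; the inclusion $\{(\mu x_0,\overline{\mu}x_0^*):|\mu|=1\}\subseteq M_{w(T)}$ is then a direct check. For the reverse inclusion take any $(y,y^*)\in M_{w(T)}$, write $y^*(Ty)=w(T)e^{i\theta}$, and observe that $g:=e^{-i\theta}(y^*\otimes y)$ satisfies $g(T)=w(T)$ and $\|g\|=1$ (using $y^*\otimes y\in\mathcal{A}$ and $w(T)>0$), so $g\in J_w(T)=\{f_0\}$. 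Recalling $f_0=(a^*(a))(x_0^*\otimes x_0)$, this forces an equality of rank-one functionals $y^*\otimes y=(\lambda x_0^*)\otimes x_0$ with $|\lambda|=1$; evaluating and using $y^*(y)=x_0^*(x_0)=1$ yields $\lambda=1$ and hence $(y,y^*)=(\mu x_0,\overline{\mu}x_0^*)$.

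The main obstacle is the last step: passing from the equality $(e^{-i\theta}y^*)\otimes y=f_0$ to the conclusion that $y$ and $y^*$ are unimodular multiples of $x_0$ and $x_0^*$. Lemma \ref{lem-gen} only treats the case of a \emph{shared} factor, so I would first establish the general proportionality principle: if $u^*\otimes u=v^*\otimes v$ with all four vectors nonzero, then $v=cu$ and $v^*=c^{-1}u^*$ for some scalar $c\neq0$. This follows by evaluating both functionals on the rank-one operators $x\mapsto x^*(x)w$, exactly as in the proof of Lemma \ref{lem-gen}, which reduces the identity to the bilinear relation $w^*(u)\,u^*(w)=w^*(v)\,v^*(w)$ for all $w\in X,\ w^*\in X^*$. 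The normalizations $\|y\|=\|y^*\|=1$ then force $|c|=1$, while $y^*(y)=1$ fixes its phase, completing the identification of $M_{w(T)}$.
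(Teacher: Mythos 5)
Your proof is correct, and its skeleton coincides with the paper's: both directions rest on the observation that $J_w(T)$ is a compact convex face of $B_{\mathcal{L}(X)_w^*}$, so its extreme points are extreme points of the ball, and Theorem \ref{th-exfi} then forces them into the form $x^*\otimes x$ with extreme-point factors; the rest is translation between such rank-one functionals and pairs in $M_{w(T)}$. The one genuine divergence is how you resolve an equality of rank-one functionals with \emph{different} factors. The paper invokes an external result (\cite[Lem. 3.1]{MDP}) to conclude that either the functional factors or the vector factors are linearly dependent, and then finishes with a two-case application of Lemma \ref{lem-gen}. You instead prove a self-contained proportionality principle: $u^*\otimes u=v^*\otimes v$ (all factors nonzero) forces $v=cu$, $v^*=c^{-1}u^*$ for some $c\neq 0$, obtained by evaluating both sides on the rank-one operators $x\mapsto w^*(x)w$ to get the bilinear identity $w^*(u)\,u^*(w)=w^*(v)\,v^*(w)$ and then fixing one argument at a time --- exactly the technique used to prove Lemma \ref{lem-gen}, pushed one step further. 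This buys independence from \cite{MDP} and eliminates the case analysis, at the cost of one extra (easy) lemma; you also correctly identified that Lemma \ref{lem-gen} alone is insufficient here because it requires a shared factor. Your direct form of (ii) $\Rightarrow$ (i) (every extreme point of $J_w(T)$ is the same point, then Krein--Milman) versus the paper's argument by contradiction is only a cosmetic difference, and your explicit exclusion of the degenerate case $T=0$ is a small point the paper leaves implicit.
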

\begin{proof}
	(i) $\Rightarrow$ (ii). Let $T$ be nu-smooth. Then $J_w(T)=\{f\}$ for some $f\in S_{\mathcal{L}(X)_w^*}.$ Clearly, $f$ is an extreme point of $J_w(T).$  Since $J_w(T)$ is an extremal subset of $B_{\mathcal{L}(X)_w^*},$ $f$ must be an extreme point of $B_{\mathcal{L}(X)_w^*}.$ Thus, by Theorem \ref{th-exfi}, $f=x_0^*\otimes x_0,$ where $x_0^*\in E_{X^*}, x_0\in E_X$ and $|x_0^*(x_0)|=1,$ i.e., $x_0^*(x_0)=\alpha$ for some $|\alpha|=1.$ Thus, $\overline{\alpha} x_0^*(x_0)=1.$ Now, 
	$$|\overline{\alpha}f(T)|=w(T)\Rightarrow |\overline{\alpha}x_0^*\otimes x_0(T)|=w(T)\Rightarrow |\overline{\alpha}x_0^*(Tx_0)|=w(T).$$
	Thus, $(x_0,\overline{\alpha} x_0^*)\in M_{w(T)},$ i.e., $(\mu x_0,\overline{\mu}\overline {\alpha} x_0^*)\in M_{w(T)}$ for all scalars $\mu$ with $|\mu|=1.$ Now, suppose that $(x,x^*)\in M_{w(T)}.$ Then $x^*(x)=1$ and $|x^*(Tx)|=w(T),$ i.e., there exists a scalar $\mu$ with $|\mu|=1$ such that $\mu x^*\otimes x(T)=w(T).$ Since $\mu x^*\otimes x\in B_{\mathcal{L}(X)_w^*},$ we must have $\|\mu x^*\otimes x\|=1$ and $\mu x^*\otimes x\in J_w(T)=\{x_0^*\otimes x_0\}.$ Thus, we get
	 $$\mu x^*\otimes x=x_0^*\otimes x_0.$$
	   Now, from \cite[Lem. 3.1]{MDP} it follows that either $\{\mu x^*,x_0^*\}$ or $\{x,x_0\}$ is linearly dependent. Note that, if $\mu x^*=\lambda x_0^*$ for some scalar $\lambda,$ then 
	 $$\mu x^*\otimes x= x_0^*\otimes x_0\Rightarrow \lambda x_0^*\otimes x=x_0^*\otimes x_0\Rightarrow x_0^*\otimes \lambda x=x_0^*\otimes x_0.$$ Therefore, by Lemma \ref{lem-gen}, we get $ \lambda x=x_0.$ Now, $x^*(x)=1\Rightarrow x_0^*( x_0)=\mu,$ which shows that $\mu=\alpha.$ Thus, $(x,x^*)=(\frac{1}{ \lambda}x_0,\frac{\lambda}{\alpha} x_0^*)=(\overline{\lambda}x_0,\lambda \overline{\alpha}x_0^*).$ On the other hand, if $x=\rho x_0$ for some scalar $\rho,$ then $|\rho|=1$ and
	  $$\mu x^*\otimes x=x_0^*\otimes x_0\Rightarrow \mu x^*\otimes \rho x_0= x_0^*\otimes x_0\Rightarrow \mu \rho x^*\otimes x_0=x_0^*\otimes x_0.$$ Thus, again by Lemma \ref{lem-gen}, we have $x_0^*=\mu \rho x^*.$ Note that $x^*(x)=1$ gives $x_0^*(x_0)=\mu,$ i.e., $\mu=\alpha.$ Thus, $(x,x^*)=(\rho x_0,\frac{1}{\alpha \rho}x_0^*)=(\rho x_0,\overline{\rho }\overline{\alpha }x_0^*).$ Hence, we get
	  \[M_{w(T)}=\{(\mu x_0,\overline {\mu} \overline{\alpha} x_0^*):|\mu|=1\},\] where $\overline {\alpha} x_0^*(x_0)=1,$ $\overline{\alpha}x_0^*\in E_{X^*},x\in E_X.$ This proves (ii).\\
	  
	  (ii) $\Rightarrow$ (i). If possible, suppose $T$ is not nu-smooth, i.e., $J_w(T)$ is not singleton. Then $J_w(T)$ contains at least two distinct extreme points. Let $f_1,f_2$ be two distinct extreme points of $J_w(T).$ Since  $J_w(T)$ is an extremal subset of $B_{\mathcal{L}(X)_w^*},$ $f_1,f_2\in E_{\mathcal{L}(X)_w^*}.$ Thus, from Theorem \ref{th-exfi}, we get for $i=1,2,$ $f_i=x_i^*\otimes x_i$ for some $x_i^*\in E_{X^*},x_i\in E_X$ with $|x_i^*(x_i)|=1.$  Let $x_i^*(x_i)=\alpha_i,$ where $|\alpha_i|=1.$ Then as before, we get $(x_i,\overline{\alpha_i} x_i^*)\in M_{w(T)}.$ Now, from (ii), we have $(x_i,\overline{\alpha_i} x_i^*)=(\mu_i x_0,\overline{\mu_i} x_0^*)$ for $i=1,2,$ where $|\mu_i|=1.$  Thus, $x_i=\mu_i x_0$ and $ \overline{\alpha_i}x_i^*=\overline{\mu_i}x_0^*.$ This gives that $x_i^*\otimes x_i=\alpha_i x_0^*\otimes x_0.$ Now, from 
	  \[w(T)=x_i^*\otimes x_i(T)=\alpha_i x_0^*\otimes x_0(T)=\alpha_ix_0^*(Tx_0),\]
	  it follows that $\alpha_1=\alpha_2,$ i.e., $x_1^*\otimes x_1=x_2^*\otimes x_2.$ This contradicts that $f_1,f_2$ are distinct. Thus $T$ is nu-smooth.
\end{proof}
We would like to mention that Theorem \ref{th-smooth} was first proved in \cite[Th. 2.5]{RS21} with a completely different approach. The proof presented here once again indicates the importance of Theorem \ref{th-exfi}. 
Next, we obtain the desired distance formula in  $\mathcal{L}(X)_w.$ 

\begin{theorem}\label{th-smoothdis}
	Let $\mathbb{X}$ be a finite-dimensional Banach space. Let $T\in \mathcal{L}(X)_w$ and  $\mathcal{V}$ be a subspace of $\mathcal{L}(X)_w.$ Suppose there exists $S\in \mathscr{L}_{\mathcal{V}}(T)_w$ such that $T-S$ is nu-smooth. Then 
	\begin{eqnarray*}
		&& d_w(T,\mathcal{V})\\
		&=&\max\Big\{x^*(Tx):x\in E_{X},x^*\in E_{X^*}, |x^*(x)|=1, \Im(x^*(Tx))=0,\\ && \hspace{6.2cm}x^*(Ax)=0~\forall~A\in\mathcal{V}\Big\}.
	\end{eqnarray*}
\end{theorem}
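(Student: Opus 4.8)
The plan is to collapse the whole formula onto the single extreme functional that nu-smoothness provides. First I would put $R=T-S$. Since $S\in\mathscr{L}_{\mathcal{V}}(T)_w$, the standard equivalence between best approximation and Birkhoff--James orthogonality recorded in the introduction ($z_0\in\mathscr{L}_Z(x)$ iff $x-z_0\perp_B Z$, here with the numerical radius norm) gives simultaneously $d_w(T,\mathcal{V})=w(R)$ and $R\perp_w\mathcal{V}$. Because $R$ is nu-smooth, $J_w(R)$ is a singleton, say $J_w(R)=\{f\}$. As $J_w(R)$ is an extremal subset of $B_{\mathcal{L}(X)_w^*}$ and $f$ is (trivially) its only extreme point, $f$ is an extreme point of $B_{\mathcal{L}(X)_w^*}$, so Theorem \ref{th-exfi} yields $f=x_0^*\otimes x_0$ with $x_0^*\in E_{X^*}$, $x_0\in E_X$ and $|x_0^*(x_0)|=1$; moreover $f(R)=w(R)$ forces $x_0^*(Rx_0)=w(R)$, a nonnegative real number.

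The key observation is that $R\perp_w\mathcal{V}$ together with $J_w(R)=\{f\}$ forces the single functional $f$ to annihilate all of $\mathcal{V}$: for each $A\in\mathcal{V}$ the orthogonality $R\perp_w A$ supplies some $g\in J_w(R)$ with $g(A)=0$, but $g=f$, whence $x_0^*(Ax_0)=f(A)=0$ for every $A\in\mathcal{V}$. In particular $x_0^*(Sx_0)=0$, so $x_0^*(Tx_0)=x_0^*(Rx_0)+x_0^*(Sx_0)=w(R)=d_w(T,\mathcal{V})$. Thus the pair $(x_0,x_0^*)$ is admissible for the right-hand set: indeed $x_0\in E_X$, $x_0^*\in E_{X^*}$, $|x_0^*(x_0)|=1$, $\Im(x_0^*(Tx_0))=\Im(w(R))=0$, and $x_0^*(Ax_0)=0$ for all $A\in\mathcal{V}$. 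Hence the right-hand maximum is at least $d_w(T,\mathcal{V})$.

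For the reverse inequality I would take an arbitrary admissible pair $(x,x^*)$, that is, $x\in E_X$, $x^*\in E_{X^*}$, $|x^*(x)|=1$, $\Im(x^*(Tx))=0$ and $x^*(Ax)=0$ for all $A\in\mathcal{V}$. Since $S\in\mathcal{V}$ gives $x^*(Sx)=0$, we have $x^*(Tx)=x^*(Rx)$, which is real by hypothesis. Writing $x^*(x)=\lambda$ with $|\lambda|=1$, the pair $(x,\overline{\lambda}x^*)$ satisfies $\overline{\lambda}x^*(x)=1$ with $x\in S_X$ and $\overline{\lambda}x^*\in S_{X^*}$, so by the very definition of the numerical radius $|x^*(Rx)|=|\overline{\lambda}x^*(Rx)|\leq w(R)$. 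Therefore $x^*(Tx)\leq|x^*(Tx)|=|x^*(Rx)|\leq w(R)=d_w(T,\mathcal{V})$, so the right-hand supremum is at most $d_w(T,\mathcal{V})$ and is attained at $(x_0,x_0^*)$. Combining the two bounds finishes the proof.

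The only two points needing care are the passage from nu-smoothness to the statement that the unique functional $f$ kills every element of $\mathcal{V}$, and the unimodular normalization used to invoke the numerical-radius bound when $|x^*(x)|=1$ rather than $x^*(x)=1$. Both are routine given Theorems \ref{th-exfi} and \ref{th-smooth}, so I do not expect a genuine obstacle; the entire argument is a direct consequence of the extremal structure of $B_{\mathcal{L}(X)_w^*}$ established in Section 2.
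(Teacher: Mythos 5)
Your proposal is correct and follows essentially the same route as the paper: decompose via $T_0=T-S$, use the extremality of the singleton $J_w(T_0)$ together with Theorem \ref{th-exfi} to write the unique supporting functional as $x_0^*\otimes x_0$, observe that orthogonality to $\mathcal{V}$ forces this functional to annihilate $\mathcal{V}$, and then prove the reverse inequality by a direct numerical-radius estimate on admissible pairs. The only cosmetic difference is that you bound $x^*(Tx)=x^*(Rx)\leq w(R)$ using the specific best approximation $S$, while the paper bounds $x^*(Tx)\leq w(T-A)$ for every $A\in\mathcal{V}$ and takes the infimum; both yield the same conclusion.
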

\begin{proof}
	Assume that $T-S=T_0.$ Then from $S\in \mathscr{L}_{\mathcal{V}}(T)_w,$ we get $T-S\perp_w \mathcal{V},$ i.e., $T_0\perp_w \mathcal{V}.$ Since $T_0$ is nu-smooth, proceeding similarly as Theorem \ref{th-smooth}, we get $J_w(T_0)=\{x_0^*\otimes x_0\}$ for some $x_0\in E_X,x_0^*\in E_{X^*},|x_0^*(x_0)|=1.$ Thus, for all $A\in\mathcal{V},$ $x_0^*\otimes x_0(A)=0\Rightarrow x_0^*(Ax_0)=0.$ Now,
	 $$x_0^*(Tx_0)=x_0^*\otimes x_0(T)= x_0^*\otimes x_0(T_0)=w(T_0)=d_w(T,\mathcal{V}).$$
	 This clearly implies that $\Im(x_0^*(Tx_0))=0.$ Hence,
	 	\begin{eqnarray*}
	 	 d_w(T,\mathcal{V})&=&x_0^*(Tx_0)\\
	 	&\leq&\sup\Big\{x^*(Tx):x\in E_{X},x^*\in E_{X^*}, |x^*(x)|=1, \Im(x^*(Tx))=0,\\ && \hspace{6.2cm}x^*(Ax)=0~\forall~A\in\mathcal{V}\Big\}.
	 \end{eqnarray*}
 On the other hand, observe that if $x\in E_{X},x^*\in E_{X^*}$ with $|x^*(x)|=1$ satisfy $ \Im(x^*(Tx))=0$ and $x^*(Ax)=0$ for all $A\in\mathcal{V},$ then 
 \[x^*(Tx)=\Re(x^*(Tx))=\Re(x^*(Tx-Ax))\leq|x^*(Tx-Ax)|\leq w(T-A).\]
 Thus, $x^*(Tx)\leq \inf\{w(T-A):A\in \mathcal{V}\}=d_w(T,\mathcal{V}).$ Therefore, we get
 	\begin{eqnarray*}
 &&\sup\Big\{x^*(Tx):x\in E_{X},x^*\in E_{X^*}, |x^*(x)|=1, \Im(x^*(Tx))=0,\\ && \hspace{6.2cm}x^*(Ax)=0~\forall~A\in\mathcal{V}\Big\}\\
 	&\leq& d_w(T,\mathcal{V}).	
 \end{eqnarray*}
This proves that the above inequality is actually an equality. Moreover, the supremum is attained at $x_0^*(Tx_0).$ This completes the proof of the theorem.  
\end{proof}
We end this article with a nice equivalence between B-J orthogonality  in $\mathcal{L}(X)_w$ and that in $X.$
\begin{theorem}
	Let $X$ be a finite-dimensional Banach space and $Z$ be a subspace of $X.$ Let  $T\in \mathcal{L}(X)_w$ be nu-smooth. Suppose that  $M_{w(T)}=\{(\mu x_0,\overline{\mu} x_0^*):|\mu|=1\}$ for some $x_0^*\in E_{X^*},x_0\in E_X$ with $x_0^*(x_0)=1.$ Assume that $x_0$ is smooth in $X.$  Then 
	$$T\perp_w\mathcal{L}(X,Z) \Leftrightarrow x_0\perp_B Z.$$ 
\end{theorem}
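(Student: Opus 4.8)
The plan is to collapse both sides of the claimed equivalence to the single linear condition $Z\subseteq\ker x_0^*$, exploiting that each orthogonality relation is governed by a \emph{unique} supporting functional. First I would use nu-smoothness of $T$ to pin down $J_w(T)$. Since $T$ is nu-smooth, $J_w(T)$ is a singleton, and arguing exactly as in the proof of Theorem \ref{th-smooth}, its unique member is of the form $\lambda\,(x_0^*\otimes x_0)$ for some scalar $\lambda$ with $|\lambda|=1$, where $x_0,x_0^*$ are the vectors furnished by the hypothesis on $M_{w(T)}$. By the James-type characterization recorded in the introduction, $T\perp_w A$ if and only if there is $f\in J_w(T)$ with $f(A)=0$; as $J_w(T)$ is a singleton this reads $\lambda\,x_0^*(Ax_0)=0$, i.e. $x_0^*(Ax_0)=0$. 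Consequently
\[T\perp_w\mathcal{L}(X,Z)\iff x_0^*(Ax_0)=0\ \text{ for all } A\in\mathcal{L}(X,Z).\]

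Next I would treat the right-hand side. Because $x_0\in E_X\subseteq S_X$ and $x_0^*\in E_{X^*}\subseteq S_{X^*}$ satisfy $x_0^*(x_0)=1=\|x_0\|=\|x_0^*\|$, we have $x_0^*\in J(x_0)$; smoothness of $x_0$ then forces $J(x_0)=\{x_0^*\}$. By James's theorem, $x_0\perp_B z$ if and only if some $g\in J(x_0)$ annihilates $z$, which here means $x_0^*(z)=0$. Hence
\[x_0\perp_B Z\iff x_0^*(z)=0\ \text{ for all } z\in Z,\]
that is, $Z\subseteq\ker x_0^*$.

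It then remains to bridge the two reformulations, i.e. to show that $x_0^*(Ax_0)=0$ for every $A\in\mathcal{L}(X,Z)$ is equivalent to $Z\subseteq\ker x_0^*$. The implication from $Z\subseteq\ker x_0^*$ is immediate, since $Ax_0\in Z$ for every $A\in\mathcal{L}(X,Z)$. For the converse I would produce, for a prescribed $z\in Z$, an operator realizing $z$ as a value at $x_0$: fixing $\phi\in X^*$ with $\phi(x_0)=1$ (possible as $x_0\neq 0$) and setting $Au=\phi(u)z$, one has $A\in\mathcal{L}(X,Z)$ and $Ax_0=z$, so the hypothesis gives $x_0^*(z)=x_0^*(Ax_0)=0$. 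Combining the three displayed equivalences yields $T\perp_w\mathcal{L}(X,Z)\Leftrightarrow x_0\perp_B Z$.

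The argument is short precisely because nu-smoothness and smoothness each reduce an \emph{a priori} quantifier-over-functionals statement to a single functional equation; the only genuine content is the rank-one construction in the bridge step, together with the bookkeeping needed to identify the unique elements of $J_w(T)$ and $J(x_0)$ with the pair $(x_0,x_0^*)$ coming from $M_{w(T)}$. I expect the main (and essentially only) subtlety to be confirming that the unimodular phase $\lambda$ appearing in $J_w(T)$ is irrelevant — which it is, since it does not affect the kernel condition $x_0^*(Ax_0)=0$ — and verifying $x_0^*\in J(x_0)$ so that smoothness genuinely identifies $J(x_0)=\{x_0^*\}$.
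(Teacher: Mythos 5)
Your proof is correct and follows essentially the same route as the paper: both identify $J_w(T)=\{\mu\, x_0^*\otimes x_0\}$ directly from the hypothesis on $M_{w(T)}$ together with nu-smoothness, use smoothness of $x_0$ to get $J(x_0)=\{x_0^*\}$, and pass between the operator condition and the vector condition via a rank-one operator (the paper simply takes $\phi=x_0^*$ in your bridge construction, since $x_0^*(x_0)=1$). The only difference is organizational—you collapse both sides to the common condition $Z\subseteq\ker x_0^*$, whereas the paper argues the two implications of the equivalence directly.
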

\begin{proof}
	From $(x_0,x_0^*)\in M_{w(T)},$ it follows that $|x_0^*(Tx_0)|=w(T)$ and $x_0^*(x_0)=1.$ Let $\mu x_0^*(Tx_0)=w(T)$ for some scalar $\mu$ with $|\mu|=1.$ Then $\mu x_0^*\otimes x_0(T)=w(T),$ where $|\mu x_0^*(x_0)|=1.$  Hence, $\mu x_0^*\otimes x_0\in J_w(T).$ Thus, $J_w(T)=\{\mu x_0^*\otimes x_0\},$ since $T$ is nu-smooth.\\
	First suppose that $T\perp_w\mathcal{L}(X,Z).$ Then $\mu x_0^*\otimes x_0(S)=0$ for all $S\in \mathcal{L}(X,Z).$ Choose arbitrary $z\in Z.$ Consider $S\in \mathcal{L}(X,Z)$ defined as $S(x)=x_0^*(x)z.$ Then
	\[\mu x_0^*\otimes x_0(S)=0\Rightarrow \mu x_0^*(Sx_0)=0\Rightarrow x_0^*(z)=0\Rightarrow x_0\perp_B z.\]
	Thus, we get $x_0\perp_B Z.$
	
	\noindent	Conversely, suppose that $x_0\perp_B Z.$ Since $x_0$ is smooth, $J(x_0)=\{x_0^*\}$ and so $x_0^*(z)=0$ for all $z\in Z.$ Since for any $S\in \mathcal{L}(X,Z),$ $Sx_0\in Z,$ we have $x_0^*(Sx_0)=0,$ i.e., $\mu x_0^*\otimes x_0(S)=0.$ Thus, $T\perp_w\mathcal{L}(X,Z).$ This completes the proof.
	
\end{proof}

\bibliographystyle{amsplain}

\end{document}